\newtheoremstyle{theorem}
  {12pt}          
  {12pt}  
  {\sl}  
  {\parindent}     
  {\bf}  
  {. }    
  { }    
  {}     
\theoremstyle{theorem}
\newtheorem{theorem}{Theorem}
\newtheorem{corollary}[theorem]{Corollary}
\newtheorem{remark}[theorem]{Remark}
\newtheorem{proposition}[theorem]{Proposition}
\newtheorem{lemma}[theorem]{Lemma}
\newtheorem{conjecture}{Conjecture}
\newtheorem{definition}[theorem]{Definition}
\newcommand{\ic}{\ensuremath{\mathcal{I}}}
\newcommand{\oc}{\ensuremath{\mathcal{O}}}
\newcommand{\fc}{\ensuremath{\mathcal{F}}}
\newcommand{\ec}{\ensuremath{\mathcal{E}}}
\newcommand{\cc}{\ensuremath{\mathcal{C}}}
\newcommand{\lc}{\ensuremath{\mathcal{L}}}
\newcommand{\tc}{\ensuremath{\mathcal{T}}}
\newcommand{\Pt}{\mathbb{P}^3}
\newcommand{\Ptw}{\mathbb{P}^2}
\newcommand{\Pq}{\mathbb{P}^4}
\newcommand{\Pn}{\mathbb{P}^n}
\newcommand{\bZ}{\mathbb{Z}}
\newcommand{\bC}{\mathbb{C}}
\newcommand{\bP}{\mathbb{P}}
\newcommand{\lG}{\lambda}
\newcommand{\oL}{\overline}
\newcommand{\dual}{\spcheck}
\newcommand{\bds}{\begin{displaystyle}}
\newcommand{\eds}{\end{displaystyle}}
\title[Spaces of matrices of constant rank.]{Spaces of matrices of constant rank\\ and uniform vector bundles.}
\author{Ph. ELLIA- P. MENEGATTI}
\address{Dipartimento di Matematica e Informatica, 35 via Machiavelli, 44100 Ferrara}
\email{Philippe Ellia: phe@unife.it\newline
\indent Paolo Menegatti: paolo.menegatti@student.unife.it}
\subjclass[2010] {15A30, 14J60} \keywords{Spaces of matrices, constant rank, uniform, vector bundles.}
\date{\today}
\begin{document}
\maketitle


\begin{abstract} We consider the problem of determining $l(r,a)$, the maximal dimension of a subspace of $a\times a$ matrices of rank $r$. We first review, in the language of vector bundles, the known results. Then using known facts on uniform bundles we prove some new results and make a conjecture. Finally we determine $l(r;a)$ for every $r$, $1 \leq r \leq a$, when $a \leq 10$, showing that our conjecture holds true in this range.
\end{abstract}

\thispagestyle{empty}

\section*{Introduction.}
Let $A, B$ be $k$-vector spaces of dimensions $a,b$ ($k$ algebraically closed, of characteristic zero). A sub-vector space $M \subset \lc (A,B)$ is said to be of (constant) rank $r$ if every $f \in M, f \neq 0$, has rank $r$. The question considered in this paper is to determine $l(r,a,b) := max\,\{\dim M \mid M\subset \lc (A,B)$ has rank $r\}$.  This problem has been studied some time ago by various authors (\cite{Westwick72}, \cite{Sylvester}, \cite{Beasley},\cite{Eisenbud-Harris-low rk}) and has been recently reconsidered, especially in its (skew) symmetric version (\cite{Ilic-Landsberg}, \cite{Manivel-Mezzetti}, \cite{Fania-Mezzetti}, \cite{Boralevi-Faenzi-Mezzetti}).

This paper is organized as follows. In the first section we recall some basic facts. It is known, at least since \cite{Sylvester}, that to give a subspace $M$ of constant rank $r$, dimension $n+1$, is equivalent to give an exact sequence: $0 \to F \to a.\oc (-1) \stackrel{\psi}{\to} b.\oc \to E \to 0$, on $\Pn$, where $F, E$ are vector bundles of ranks $(a-r)$, $(b-r)$. We observe that the bundle $\ec := Im(\psi )$, of rank $r$, is \emph{uniform}, of splitting type $(-1^c,0^{r-c})$, where $c := c_1(E)$ (Lemma \ref{L-ec uniform}).

Then in Section two, we set $a=b$ to fix the ideas and we survey the known results (at least those we are aware of), giving a quick, uniform (!) treatment in the language of vector bundles. In Section three, using known results on uniform bundles, we obtain a new bound on $l(r;a)$ in the range $(2a+2)/3 > r > (a+2)/2$ (as well as some other results, see Theorem \ref{T-le thm}). By the way we don't expect this bound to be sharp. Indeed by ''translating'' (see Proposition \ref{P-conjecturale}) a long standing conjecture on uniform bundles (Conjecture \ref{Cj-homo}), we conjecture that $l(r;a) = a-r+1$ in this range (see Conjecture \ref{Cj-l(r;a)}). Finally, with some ad hoc arguments, we show in the last section, that our conjecture holds true for $a \leq 10$ (actually we determine $l(r,a)$ for every $r$, $1 \leq r \leq a$, when $a \leq 10$).

\section{Generalities.}

Following \cite{Sylvester}, to give $M \subset Hom(A,B)$, a sub-space of constant rank $r$, with $\dim (M)=n+1$, is equivalent to give on $\Pn$, an exact sequence:
\begin{equation}
\label{eq:exact seq}
\xymatrix{0\ar [r]& F_M\ar [r]&a.\oc (-1)\ar [rr]^{\psi _M} \ar@{->>}[rd]& &b.\oc \ar [r]&E_M\ar[r]&0 \\
 & & &\ec _M \ar@{^{(}->}[ru]& & & }
\end{equation}

\noindent where $\ec _M= Im(\psi _M), F_M, E_M$ are vector bundles of ranks $r, a-r, b-r$ (in the sequel we will drop the index $M$ if no confusion can arise).

Indeed the inclusion $i: M \hookrightarrow Hom(A,B)$ is an element of $Hom(M, A\dual \otimes B) \simeq M\dual \otimes A\dual \otimes B$ and can be seen as a morphism $\psi : A\otimes \oc \to B\otimes \oc (1)$ on $\bP (M)$ (here $\bP (M)$ is the projective space of lines of $M$). At every point of $\bP (M)$, $\psi$ has rank $r$, so the image, the kernel and the cokernel of $\psi$ are vector bundles. 

A different (but equivalent) description goes as follows: we can define $\psi : A\otimes \oc (-1) \to B\otimes \oc$ on $\bP (M)$, by $(v, \lG f) \to \lG f(v)$.

The vector bundle $\ec _M$ is of a particular type.
\begin{definition}
A rank $r$ vector bundle, $E$, on $\Pn$ is \emph{uniform} if there exists $(a_1,...,a_r)$ such that $E_L \simeq \bigoplus _{i=1}^r \oc _L(a_i)$, for every line $L \subset \Pn$ ($(a_1,...,a_r)$ is the splitting type of $E$, it is independent of $L$).

The vector bundle $F$ is \emph{homogeneous} if $g^*(F)\simeq F$, for every automorphism of $\Pn$.
\end{definition}

Clearly a homogeneous bundle is uniform (but the converse is not true).   

The first remark is:

\begin{lemma}
\label{L-ec uniform}
With notations as in (\ref{eq:exact seq}), $c_1(E_M)\geq 0$ and $\ec _M$ is a uniform bundle of splitting type $(-1^c,0^b)$, where $c=c_1(E_M), b = r-c$.
\end{lemma}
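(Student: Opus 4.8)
The plan is to split the four-term sequence in (\ref{eq:exact seq}) into the two short exact sequences
\[ 0 \to F \to a.\oc(-1) \to \ec \to 0, \qquad 0 \to \ec \to b.\oc \to E \to 0, \]
and to determine the restriction of $\ec$ to an arbitrary line directly from these. First I would observe that $E$, being a quotient of the trivial bundle $b.\oc$, is globally generated; hence for any line $L \subset \Pn$ the bundle $E_L$ is a globally generated bundle on $L \cong \Pun$, so every summand in its Grothendieck decomposition has degree $\ge 0$ and thus $\deg E_L = c_1(E)\cdot L \ge 0$. Since $c_1(E)$ is an integer multiple of the hyperplane class, this already gives $c := c_1(E) \ge 0$.

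Next, fix a line $L \cong \Pun$. As all the sheaves in the two short exact sequences are locally free, restriction to $L$ keeps them exact. By Grothendieck's splitting theorem write $\ec_L \simeq \bigoplus_{i=1}^{r} \oc_L(a_i)$. From $0 \to \ec_L \to b.\oc_L \to E_L \to 0$, the bundle $\ec_L$ is a subbundle of a trivial bundle, so each inclusion $\oc_L(a_i) \hookrightarrow b.\oc_L$ of a summand is nonzero, forcing $a_i \le 0$. From $0 \to F_L \to a.\oc_L(-1) \to \ec_L \to 0$, the bundle $\ec_L$ is a quotient of $a.\oc_L(-1)$, so composing with the $i$-th projection yields a surjection $a.\oc_L(-1) \twoheadrightarrow \oc_L(a_i)$, forcing $a_i \ge -1$. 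Hence every $a_i$ lies in $\{-1,0\}$ and $\ec_L \simeq \oc_L(-1)^{c(L)} \oplus \oc_L^{\,r - c(L)}$, where $c(L)$ denotes the number of indices $i$ with $a_i = -1$.

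To finish, note that $c(L) = -\deg \ec_L = -c_1(\ec)\cdot L$ does not depend on $L$, because $c_1(\ec)$ is a fixed multiple of the hyperplane class; and from $0 \to \ec \to b.\oc \to E \to 0$ one gets $c_1(\ec) = -c_1(E)$, so $c(L) = c_1(E) = c$ for every $L$. Therefore $\ec$ is uniform of splitting type $(-1^c, 0^{\,r-c})$, with $c \ge 0$ by the first step and $c \le r$ trivially from the rank. I do not expect a genuine obstacle: the only points requiring a little care are the exactness of the restrictions to $L$ (immediate from local freeness) and the use of Grothendieck's splitting theorem on $\Pun$, together with the elementary fact that $\mathrm{Hom}(\oc_{\Pun}(d),\oc_{\Pun}(e)) \neq 0$ only when $d \le e$.
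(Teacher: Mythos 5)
Your proof is correct and follows essentially the same route as the paper's: global generation of $E$ gives $c_1(E)\geq 0$, and restricting to a line the surjection from $a.\oc_L(-1)$ and the injection into $b.\oc_L$ pin each $a_i$ to $\{-1,0\}$, with $c_1(\ec)=-c_1(E)$ fixing the splitting type independently of $L$. The extra details you spell out (exactness of restrictions, Grothendieck splitting, $c(L)=-c_1(\ec)\cdot L$) are exactly what the paper leaves implicit.
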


\begin{proof} Since $E$ is globally generated, $c_1(E)\geq 0$ (look at $E_L$). Let $\ec _L=\bigoplus \oc _L(a_i)$. We have $a_i \geq -1$, because $a.\oc _L(-1) \twoheadrightarrow \ec_L$. We have $a_i \leq 0$, because $\ec _L \hookrightarrow b.\oc _L$. So $-1 \leq a_i \leq 0, \forall i$. Since $c_1(\ec )=-c_1(E)$ the splitting type is as asserted and does not depend on the line $L$.
\end{proof}

The classification of rank $r \leq n+1$ uniform bundles on $\Pn$, $n\geq 2$, is known (\cite{VdV}, \cite{Elencwajg}, \cite{Elencwajg-Hirschowitz-Schneider}, \cite{Ballico-n+1}):

\begin{theorem}
\label{T-unif r<=n+1}
A rank $r \leq n+1$ uniform vector bundle on $\Pn$, $n \geq 2$, is one of the following: $\bigoplus ^r \oc (a_i)$, $T(a)\oplus k.\oc (b)$, $\Omega (a)\oplus k.\oc (b)$ ($0 \leq k \leq 1$), $S^2T_{\Ptw}(a)$.
\end{theorem}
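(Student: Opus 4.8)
The plan is to run the classical \emph{standard construction} on the flag variety of pointed lines, reducing the classification to a statement about morphisms from $\mathbb{P}^{n-1}$ into small Grassmannians. Let $F=\{(x,L):x\in L\}\cong\bP(T_{\Pn})$ be the incidence variety, with projections $p:F\to\Pn$, whose fibre $P_x:=p^{-1}(x)$ is the $\mathbb{P}^{n-1}$ of lines through $x$, and $q:F\to\mathbb{G}(1,n)$, a $\bP^1$-bundle with fibres the lines $L$. Writing the splitting type of $E$ as $\bigoplus_j\oc_L(b_j)^{m_j}$ with $b_1>\dots>b_t$, uniformity gives the relative Harder--Narasimhan filtration of $p^*E$ along $q$: a filtration $0=\mathcal{S}_0\subset\mathcal{S}_1\subset\dots\subset\mathcal{S}_t=p^*E$ by subbundles with $\mathcal{S}_k|_L=\bigoplus_{j\le k}\oc_L(b_j)^{m_j}$ on every line $L$, obtained by pushing down the appropriate twists along $q$, the formation of each step commuting with base change precisely because $E$ is uniform (there is no jumping locus). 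Restricting the flag $\mathcal{S}_1\subset\dots\subset\mathcal{S}_{t-1}\subset p^*E$ to a fibre $P_x$, on which $p^*E$ is the trivial bundle with fibre $E_x$, one reads off a morphism $g_x:\mathbb{P}^{n-1}\to\mathrm{Fl}(\mathrm{rk}\,\mathcal{S}_1,\dots,\mathrm{rk}\,\mathcal{S}_{t-1};E_x)$ into a flag variety attached to the $r$-dimensional space $E_x$.

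I would then split into two cases, according to whether all the maps $g_x$ are constant. If they are, the flag $(\mathcal{S}_k)$ is constant along the fibres of $p$, hence descends to a filtration $0\subset S_1\subset\dots\subset S_t=E$ of $E$ by subbundles on $\Pn$ whose graded pieces $S_k/S_{k-1}$ are uniform of constant splitting type $(b_k,\dots,b_k)$; by the argument of Lemma \ref{L-ec uniform} each such piece is trivial after a twist, so $S_k/S_{k-1}\cong m_k\cdot\oc(b_k)$. Since $n\ge2$ we have $H^1(\Pn,\oc(d))=0$ for every $d$, hence $\mathrm{Ext}^1\bigl(S_k/S_{k-1},\,S_l/S_{l-1}\bigr)=0$ for all $k,l$, so, peeling the filtration from the top, all the extensions split and $E\cong\bigoplus_{i=1}^r\oc(a_i)$.

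The substantive case is when some $g_x$ is non-constant. Here I would exploit that $\operatorname{Pic}(F)=\bZ\,p^*\oc(1)\oplus\bZ\,q^*\oc_{\mathbb{G}}(1)$ and that $c_1(\mathcal{S}_k)|_L=0$ for each line $L$, whence $c_1(\mathcal{S}_k)=\beta_k\,q^*\oc_{\mathbb{G}}(1)$ for integers $\beta_k$; restricting to $P_x$, on which $q$ induces a linear $\mathbb{P}^{n-1}\hookrightarrow\mathbb{G}(1,n)$ pulling $\oc_{\mathbb{G}}(1)$ back to $\oc(1)$, shows the $g_x$ all have the same "degree data" $(-\beta_k)$, with some $-\beta_k>0$. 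The one step I would import from \cite{VdV}, \cite{Elencwajg}, \cite{Elencwajg-Hirschowitz-Schneider}, \cite{Ballico-n+1} is the resulting classification: when $r\le n+1$, such a non-constant $g_x$ — constrained both by the requirement that the $\mathcal{S}_k$ glue over all of $F$ and by the Chern classes forced on it by the splitting type of $E$ — must be a linear isomorphism $\mathbb{P}^{n-1}\xrightarrow{\sim}\bP(E_x)$, or its dual $\mathbb{P}^{n-1}\xrightarrow{\sim}\mathbb{G}(n-1,E_x)$, or — only when $n=2$ — the Veronese conic $\bP^1\hookrightarrow\bP(E_x)=\Ptw$. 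In the first case $\mathcal{S}_1$ is the relative $\oc(-1)$ of $F=\bP(T_{\Pn})$ and the Euler sequence identifies $E$ up to twist with $T_{\Pn}$; the second case is dual and gives $\Omega_{\Pn}$; the third gives $S^2T_{\Ptw}$. In each case the part of the flag which remains constant splits off as a twist of a trivial bundle, and — once more because $r\le n+1$ — this summand has rank at most $1$, which accounts for the optional $\oplus\,k\cdot\oc(b)$, $k\le1$, in the statement.

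The hard part is this last classification of the non-constant $g_x$: it cannot be a mere dimension count, since non-degenerate degree-$2$ morphisms $\mathbb{P}^{n-1}\to\mathbb{G}(s,r)$ with $r\le n+1$ do exist for $n\ge3$ as well (for instance a quadratic embedding $\mathbb{P}^2\hookrightarrow\mathbb{P}^3$), so one must genuinely use that $\mathcal{S}_1$ globalises over all of $F$ and that its Chern classes on the fibres $P_x$ are pinned down by the splitting type of $E$ — forcing $\beta_k=-1$ except on $\Ptw$. This is exactly where the sporadic bundle $S^2T_{\Ptw}$, which has no analogue on higher $\Pn$, enters the classification, and it is the reason the theorem is delicate despite its short statement.
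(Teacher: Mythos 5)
The paper does not prove this statement at all: Theorem \ref{T-unif r<=n+1} is quoted as known, with references to Van de Ven, Elencwajg, Elencwajg--Hirschowitz--Schneider and Ballico, so there is no internal argument to compare yours with. That said, your outline is a faithful description of the method those references actually use: the standard construction on the incidence variety $\bP (T_{\Pn})$, the relative filtration of $p^*E$ whose existence as a subbundle flag uses precisely the constancy of the splitting type, and the induced morphisms $g_x$ from $\mathbb{P}^{n-1}$ to flag varieties of $E_x$. As a framework, this is the right one and it matches the sources the paper cites.

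As a proof, however, it is not self-contained, and the step you explicitly import is the entire content of the theorem: showing that for $r\le n+1$ a non-constant $g_x$ must be a linear isomorphism onto $\bP (E_x)$, or its dual, or the Veronese conic when $n=2$, and that the constant residue of the flag contributes a split trivial summand, is exactly what occupies those papers (the rank $\le n$ case in \cite{VdV}, \cite{Elencwajg}, \cite{Elencwajg-Hirschowitz-Schneider}, and the appreciably harder rank $n+1$ case in \cite{Ballico-n+1} and \cite{Ellia-Th�se}). Two smaller inaccuracies: (a) the claim that a uniform bundle of constant splitting type $(b,\dots ,b)$ is a twist of a trivial bundle is not ``the argument of Lemma \ref{L-ec uniform}'' --- that lemma only computes a splitting type; what you need is the separate standard fact that a bundle trivial on every line is trivial, which has its own short proof via $q_*p^*$; (b) the assertion that the constant part of the flag ``splits off'' is not formal: for instance $\mathrm{Ext}^1(T_{\Pn},\oc )=H^1(\Omega _{\Pn})\neq 0$, and the corresponding non-split extension is $(n+1).\oc (1)$, so recognizing such extensions as sums of line bundles (or excluding them) is part of the case analysis you are citing rather than a consequence of $r\le n+1$ alone. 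In short: right strategy, same sources as the paper, but the decisive classification step is assumed, not proved.
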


We will use the following result (see \cite{Ein}):

\begin{theorem}{\emph{(Evans-Griffith)}}\\
\label{Evans-Griffith}
Let $\fc$ be a rank $r$ vector bundle on $\Pn$, then $\fc$ is a direct sum of line bundles if and only if $H^i_*(\fc )=0$, for $1 \leq i \leq r-1$.
\end{theorem}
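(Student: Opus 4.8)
The plan is to argue the statement (which is the Evans--Griffith splitting criterion) as follows. The implication ``direct sum of line bundles $\Rightarrow$ all intermediate cohomology vanishes in the stated range'' is immediate from $H^{i}(\Pn,\oc(t))=0$ for $0<i<n$; note in passing that the hypothesis can only hold when $r\le n$ (for $r>n$ one has $H^{n}_{*}(\fc)\neq 0$ even for a split bundle, by Serre duality), so from now on $r\le n$. For the converse, put $S=k[x_{0},\dots,x_{n}]$, let $\mathfrak m$ be its irrelevant ideal, and set $M:=\bigoplus_{t\in\bZ}H^{0}(\Pn,\fc(t))$. This is a finitely generated graded $S$-module with $\widetilde M\cong\fc$; since $\fc$ is a vector bundle, $M$ is torsion-free and is locally free of rank $r$ on $\operatorname{Spec}(S)\setminus\{\mathfrak m\}$. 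It suffices to show that $M$ is free as an $S$-module, for then $\fc=\widetilde M$ is a direct sum of line bundles.

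The first step is to convert the cohomological hypothesis into a depth bound. The standard comparison of sheaf cohomology on $\Pn$ with local cohomology at $\mathfrak m$ gives $H^{0}_{\mathfrak m}(M)=H^{1}_{\mathfrak m}(M)=0$ (because $M$ already coincides with $\bigoplus_{t}H^{0}(\fc(t))$) and $H^{j+1}_{\mathfrak m}(M)\cong H^{j}_{*}(\fc)$ for $j\ge 1$. Hence our hypothesis forces $H^{i}_{\mathfrak m}(M)=0$ for $0\le i\le r$, i.e. $\operatorname{depth}_{\mathfrak m}M\ge r+1$. By the Auslander--Buchsbaum formula, $\operatorname{pd}_{S}M=(n+1)-\operatorname{depth}_{\mathfrak m}M\le n-r$; in particular the case $r=n$ is already finished, since there $\operatorname{pd}_{S}M=0$.

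The second step is the Evans--Griffith Syzygy Theorem. Because $M$ is locally free (hence of maximal depth) at each prime $\neq\mathfrak m$ while $\operatorname{depth}_{\mathfrak m}M\ge r+1$, the module $M$ satisfies Serre's condition $S_{r+1}$; over the regular ring $S$ the Auslander--Bridger characterization of syzygies then shows $M$ is an $(r+1)$-st syzygy module (concretely: $\operatorname{depth}\ge 2$ makes $M$ reflexive, so $M$ embeds in a free module with torsion-free cokernel, and one iterates, each step being licensed by the local depth estimate). The Syzygy Theorem asserts that a non-free $(r+1)$-st syzygy module has rank $\ge r+1$; since $\operatorname{rank}M=r<r+1$, the module $M$ is free, which completes the proof.

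The single genuinely deep ingredient is the Syzygy Theorem itself, whose proof ultimately rests on the Improved New Intersection Theorem (equivalently, the Order Ideal Theorem); granting it, everything above is routine manipulation of local cohomology and the Auslander--Buchsbaum formula. I expect this to be the main obstacle, and there seems to be no way around it at this level of generality: the more elementary descending induction on $n$ by hyperplane restriction loses one degree in the vanishing range at each step, so it only recovers the weaker Horrocks criterion (vanishing for $1\le i\le n-1$), whereas pinning down the sharp range $1\le i\le r-1$ genuinely needs the Evans--Griffith machinery.
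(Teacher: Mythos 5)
Your argument is correct, but there is nothing in the paper to compare it with: the authors do not prove this statement at all, they quote it as a known theorem of Evans--Griffith with a pointer to Ein's paper, and use it only as a black box (e.g.\ in Proposition \ref{P-surj psi} and Lemma \ref{L-X aCM}). What you have written is the standard derivation of the splitting criterion from the Evans--Griffith Syzygy Theorem, and the bookkeeping is right: saturation of the section module $M=\bigoplus_t H^0(\fc (t))$ gives $H^0_{\mathfrak m}(M)=H^1_{\mathfrak m}(M)=0$, the hypothesis gives $H^i_{\mathfrak m}(M)=0$ up to $i=r$, Auslander--Buchsbaum turns this into $\mathrm{pd}_S M\le n-r$ (already settling $r=n$, which is Horrocks), and the $S_{r+1}$ condition plus the syzygy characterization over the regular ring $S$ exhibits $M$ as an $(r+1)$-st syzygy of rank $r$, hence free by the Syzygy Theorem. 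You are also right, and candid, that the only deep input is the Syzygy Theorem itself (via the Improved New Intersection Theorem), and that a naive induction by hyperplane sections only yields the Horrocks range $1\le i\le n-1$; assuming that theorem is legitimate here, since the paper itself assumes strictly more by citing the full criterion.

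One small point worth recording: as literally stated in the paper the ``only if'' direction fails when $r>n$, since then the range $1\le i\le r-1$ contains $i=n$ and $H^n_*(\fc )\neq 0$ for any nonzero bundle, split or not; the correct range is $1\le i\le \min (r-1,n-1)$, or equivalently one restricts to $r\le n$. Your parenthetical remark that the hypothesis forces $r\le n$ effectively repairs this, but you should state explicitly that the forward implication is being asserted only in that regime, rather than ``in the stated range''. This is a defect of the quoted statement, not of your argument, and it is harmless for the paper, which only ever applies the criterion to bundles of rank $<n$.
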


The first part of the following Proposition is well known, the second maybe less.

\begin{proposition}
\label{P-surj psi}
Assume $n \geq 1$.\\
(1) If $a \geq b+n$ the generic morphism $a.\oc _{\Pn} \to  b.\oc _{\Pn}(1)$ is surjective.\\
(2) If $a < b+n$ no morphism $a.\oc _{\Pn} \to  b.\oc _{\Pn}(1)$ can be surjective.
\end{proposition}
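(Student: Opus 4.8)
The plan is to prove the two parts separately, both by reduction to statements about global sections on $\Pn$.

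For part (2), suppose $\psi\colon a.\oc_{\Pn}\to b.\oc_{\Pn}(1)$ is surjective. Then its kernel $F$ is a vector bundle of rank $a-b$ fitting in $0\to F\to a.\oc\to b.\oc(1)\to 0$. Twisting by $\oc(-1)$ and taking cohomology, surjectivity on global sections gives $a = h^0(a.\oc(-1))\ge\ldots$ — actually the cleaner route is to restrict to a line. Let $L\cong\Pun$ be a general line; then $\psi_L\colon a.\oc_L\to b.\oc_L(1)$ is still surjective, so its kernel is $\bigoplus \oc_L(c_i)$ with all $c_i\le 0$ (it injects into $a.\oc_L$) and $c_1=-b$; hence the kernel has rank $a-b$ and $\sum c_i=-b$, forcing at least $b$ of the summands to be $\oc_L(-1)$ or lower, so $a-b\ge b$... this is not yet the bound. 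Instead I would argue directly: surjectivity of $a.\oc_L\to b.\oc_L(1)$ means the $b$ sections of $\oc_L(1)$ given by the rows have no common zero behaviour forcing $\dim\ge$; concretely, on $\Pun$ a surjection $a.\oc\to b.\oc(1)$ exists iff $a\ge b+1$ (a surjection onto $\oc(1)$ needs $2$ sections, and one handles the $b$ copies by a dimension count on the space of such matrices versus the incidence variety of non-surjective ones). For general $n$, restrict to a line to get $a\ge b+1$ is too weak; rather, I would use that surjectivity globally forces $h^1(F(t))$ control and count: evaluating at a point $p$, $\psi(p)\colon k^a\to k^b$ must be surjective, which only needs $a\ge b$, so the obstruction is genuinely in positivity. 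The right tool is: if $a.\oc\to b.\oc(1)$ is surjective then so is $\Lambda^b$, giving a surjection $\binom{a}{b}\oc\to \oc(b)=\det$, i.e. the $b\times b$ minors generate $\oc(b)$; but the minors are degree-$b$ forms with no common zero, and a base-point-free linear system spanned by forms of degree $b$ coming from a matrix of linear forms — by a theorem of the Eagon--Northcott type (or simply: the ideal of $b\times b$ minors of a generic $a\times b$ matrix of linear forms has codimension $a-b+1$) has empty zero locus iff $a-b+1 > n$, i.e. $a\ge b+n$. This is the key step and the main obstacle: identifying that non-surjectivity is exactly the statement that the degeneracy locus is nonempty, and invoking the expected-codimension formula for determinantal loci ($\mathrm{codim} = a-b+1$) together with genericity for part (1).

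For part (1), assume $a\ge b+n$. I would take $\psi$ generic and show its cokernel is $0$. Equivalently, the degeneracy locus $Z=\{p : \mathrm{rk}\,\psi(p) < b\}$ must be empty. Parametrize matrices of linear forms by the affine space $\mathrm{Hom}(A\otimes\oc, B\otimes\oc(1)) = A^\vee\otimes B\otimes H^0(\oc(1))$; the universal degeneracy locus in $\Pn\times(\text{this space})$ has codimension $a-b+1$ over each point of $\Pn$ (the generic determinantal variety), hence total dimension $= n + \dim(\text{space}) - (a-b+1)$. Projecting to the parameter space, if $a-b+1 > n$ the generic fibre is empty, which is exactly $a\ge b+n$. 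So the generic $\psi$ has $Z=\emptyset$, i.e. is surjective. The one point needing care is that the relevant determinantal variety is irreducible of the expected codimension and dominates $\Pn$, which follows because over a fixed $p$ the condition $\mathrm{rk}\,\psi(p)<b$ cuts out the generic determinantal variety in the space of $b\times a$ matrices, of codimension $a-b+1$, and this is a vector-bundle situation over $\Pn$.

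Alternatively, and perhaps more in the spirit of this paper, part (1) can be obtained by an explicit construction: write $a = b+n$ (the boundary case; larger $a$ follows by adding trivial summands) and exhibit a Koszul-type surjection, e.g. build $\psi$ so that its kernel is $b$ copies of the bundle $\Omega^1(1)$ of rank $n$ realized via the Euler sequence $0\to\Omega^1(1)\to (n+1).\oc\to\oc(1)\to 0$; stacking $b$ such Euler sequences gives a surjection $b(n+1).\oc\to b.\oc(1)$, and $b(n+1) = b + bn \ge b+n = a$ when $b\ge 1$, so after discarding the relevant number of columns (using genericity to keep surjectivity) one lands at $a.\oc\to b.\oc(1)$. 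The main obstacle in this approach is checking that the generic $a\times b$ submatrix of the stacked Euler matrix remains surjective, which again reduces to the codimension count for the degeneracy locus. I would present the dimension-count proof as the primary argument since it handles both parts symmetrically through the single inequality $a-b+1 > n$ versus $a-b+1\le n$.
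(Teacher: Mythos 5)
Your route is genuinely different from the paper's, and in outline it works, but one step in part (2) needs to be stated more carefully. The paper proves (1) by exhibiting one explicit $b\times (b+n)$ matrix of linear forms of rank $b$ at every point and invoking semicontinuity, and proves (2) by noting that a surjection would have a kernel bundle $K$ of rank $a-b<n$ with $H^i_*(K^\vee)=0$ in the intermediate range, so $K$ splits by Evans--Griffith, forcing the sequence to split, which is absurd (or by a Chern class computation). Your proof of (1) by the incidence-variety dimension count in $\Pn\times\bigl(A^\vee\otimes B\otimes H^0(\oc(1))\bigr)$ is correct and in fact needs less than you worry about: irreducibility and dominance of the universal degeneracy locus are irrelevant, since the inequality $\dim \le n+N-(a-b+1)<N$ already shows the projection misses the generic $\psi$. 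Your stacked-Euler alternative is fine as a sketch but, as you note, reduces to the same count.

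The delicate point is (2), which is a statement about \emph{every} morphism, so genericity, dimension counts and semicontinuity are of no use there: the projection from the universal degeneracy locus is proper, hence has closed image, but nothing in a dimension count forces that image to be the whole parameter space. What makes your minor-based argument work is the Eagon--Northcott \emph{height bound}, valid for an arbitrary matrix over a Noetherian ring: if the ideal $I_b$ of maximal minors is proper, then $\mathrm{ht}\,I_b\le a-b+1$. Here the minors are forms of positive degree, so $I_b$ is proper, and surjectivity of $\psi$ (emptiness of the degeneracy locus in $\Pn$) forces $\mathrm{ht}\,I_b=n+1$, whence $n+1\le a-b+1$, i.e.\ $a\ge b+n$. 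Your parenthetical ``or simply: the ideal of $b\times b$ minors of a \emph{generic} matrix of linear forms has codimension $a-b+1$'' is not a substitute: the generic statement says nothing about a particular surjective $\psi$, and the ``iff'' you assert is false for individual matrices (a matrix with a zero row is never surjective, however large $a$ is); only the implication ``empty degeneracy locus $\Rightarrow a\ge b+n$'' is true for all matrices, and that is exactly Eagon--Northcott (equivalently, the Fulton--Lazarsfeld nonemptiness theorem applied to the ample bundle $\mathcal{H}om(a.\oc,b.\oc(1))\simeq ab.\oc(1)$, which is the result alluded to in the paper after Proposition \ref{P-2a-2r+1}). With that citation made precise, your proof of (2) is complete and bypasses Evans--Griffith entirely, at the cost of invoking a commutative-algebra (or positivity) theorem instead of the splitting criterion.
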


\begin{proof} (1) It is enough to treat the case $a=b+n$ and, by semi-continuity, to produce one example of surjective morphism. Consider $$\Psi = \left( \begin{array}{ccccccc}
x_0 & \cdots & x_n & 0 & \cdots & & 0 \\
0 & x_0& \cdots & x_n & 0 &\cdots & 0\\
\vdots & \vdots & & & & & \\
0 & \cdots &  0 &x_0 & \cdots & &  x_n\end{array}\right)$$
(each row contains $b-1$ zeroes). It is clear that this matrix has rank $b$ at any point. For a more conceptual (and complicated) proof see \cite{VO1}, Prop. 1.1.\\  
(2) If $n=1$, the statement is clear. Assume $n \geq 2$. If $\psi$ is surjective we have $0 \to K \to a.\oc \to b.\oc (1) \to 0$ and $K$ is a vector bundle of rank $r=a-b < n$. Clearly we have $H^i_*(K\dual )=0$ for $1\leq i \leq r-1 \leq n-2$. By Evans-Griffith's theorem, $K$ splits as a direct sum of line bundles, hence the exact sequence splits ($n \geq 2$) and this is absurd. 

This can also be proved by a Chern class computation (see \cite{Sylvester}).
\end{proof}

From now on we will assume $A=B$ and write $l(r;a)$ instead of $l(r;a,a)$.

\section{Known results.}

We begin with some general facts:

\begin{lemma}
\label{L-ec splits}
Assume the bundle $\ec$ corresponding to $M \subset End(A)$ of constant rank $r$, $\dim (A)=a$, is a direct sum of line bundles. Then $\dim (M) \leq a-r+1$.
\end{lemma}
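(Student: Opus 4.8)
My plan is as follows. By Lemma~\ref{L-ec uniform} the bundle $\ec$ has splitting type $(-1^c,0^{r-c})$ with $c=c_1(E)\geq 0$, and since by hypothesis $\ec$ is a direct sum of line bundles this forces $\ec\simeq c.\oc(-1)\oplus(r-c).\oc$ exactly. The cases $r=0$ and $n=0$ are trivial, so assume $r\geq 1$ and $n\geq 1$. First I would reduce to the case $c\geq 1$: the transpose space $M^T$ has the same dimension and is again of constant rank $r$, and its associated bundle is $\ec^\vee(-1)$ — obtained by applying $\mathcal{H}om(-,\oc)$ to the four-term sequence (\ref{eq:exact seq}) and twisting — whose cokernel has first Chern class $r-c$; hence if $c=0$ we may replace $M$ by $M^T$, for which this integer is $r\geq 1$. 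So from now on $c\geq 1$.

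The key step is to use the trivial summand $T:=(r-c).\oc$ of $\ec$. As a trivial subbundle of $a.\oc$, $T=W\otimes\oc$ for some subspace $W\subset A$ of dimension $r-c$, and in particular $T$ is a direct summand, so $a.\oc/T\simeq(a-r+c).\oc$. Quotienting the exact sequence $0\to\ec\to a.\oc\to E\to 0$ (the right half of (\ref{eq:exact seq}), with $b=a$) by $T$ then gives an exact sequence
\begin{equation*}
0\to c.\oc(-1)\to(a-r+c).\oc\to E\to 0.
\end{equation*}
Since $E$ is locally free, dualizing yields $0\to E^\vee\to(a-r+c).\oc\to c.\oc(1)\to 0$; in particular $(a-r+c).\oc$ surjects onto $c.\oc(1)$.

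Now I would invoke Proposition~\ref{P-surj psi}(2) with the pair $(a-r+c,c)$ in the role of $(a,b)$: as $c\geq 1$ the target is nonzero, so the existence of the surjection above forces $a-r+c\geq c+n$, that is $n\leq a-r$, i.e. $\dim M=n+1\leq a-r+1$.

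I expect the main obstacle to be seeing which sub-object of $\ec$ to exploit. The naive moves — restricting everything to a general line, or projecting $\ec(1)=c.\oc\oplus(r-c).\oc(1)$, a quotient of $a.\oc$, onto its $(r-c).\oc(1)$ factor and applying Proposition~\ref{P-surj psi}(2) there — only produce the weaker estimate $n\leq a-r+c$. The gain comes from absorbing the trivial part $(r-c).\oc$ of $\ec$ into the \emph{source} $a.\oc$ instead of discarding it, which is what shrinks the target from $\ec(1)$ down to $c.\oc(1)$ and makes Proposition~\ref{P-surj psi}(2) sharp; the price is that the scheme degenerates when $c=0$ (the target becomes zero), which is precisely what the preliminary passage to the transpose takes care of.
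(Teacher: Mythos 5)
Your proof is correct and follows essentially the same route as the paper's: quotient the sequence $0\to\ec\to a.\oc\to E\to 0$ by the trivial summand of $\ec$ to get $0\to c.\oc(-1)\to(a-r+c).\oc\to E\to 0$, dualize, and apply Proposition~\ref{P-surj psi}(2). The only (harmless) difference is cosmetic: the paper treats the case $c=0$ directly via the surjection $a.\oc(-1)\twoheadrightarrow r.\oc$, whereas you dispose of it by passing to the transpose as in Remark~\ref{R-D-1}, and you spell out the detail, left implicit in the paper, that the trivial subbundle sits in $a.\oc$ as a direct summand $W\otimes\oc$.
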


\begin{proof} Let $\dim (M)=n+1$ and assume $\ec = k.\oc (-1)\oplus (r-k).\oc$. If $k=0$, the surjection $a.\oc (-1) \twoheadrightarrow \ec \simeq r.\oc$, shows that $a \geq r+n$ (see Proposition \ref{P-surj psi}). If $k>0$, we have $0 \to k.\oc (-1) \to (a-r+k).\oc \to E\to 0$. Dualizing we get: $(a-r+k).\oc \twoheadrightarrow k.\oc (1)$, hence (always by Proposition \ref{P-surj psi}) $a-r+k \geq k+n$. So in any case $a-r \geq n$.
\end{proof}

\begin{lemma}
\label{L-l(r,a,a) geq a-r+1}
For every $r$, $1 \leq r \leq a$, we have $l(r;a) \geq a-r+1$.
\end{lemma}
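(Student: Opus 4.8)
The plan is to exhibit, for each $r$ with $1 \leq r \leq a$, an explicit subspace $M \subset End(A)$ of constant rank $r$ with $\dim M = a-r+1$. The most economical construction is a block construction built from Kronecker-type (catalecticant) matrices, exactly in the spirit of the matrix $\Psi$ appearing in the proof of Proposition \ref{P-surj psi}. First I would set $n = a-r$, so that we want a space of dimension $n+1$, parametrized by homogeneous linear forms in variables $x_0, \dots, x_n$. Consider the $r \times a$ (hence after padding, $a \times a$) matrix whose nonzero part is the $r \times (r+n)$ band matrix with $(i,j)$ entry equal to $x_{j-i}$ for $0 \leq j-i \leq n$ and $0$ otherwise; pad with a zero block of the appropriate size to make it square. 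Call the resulting linear map $M(x)$ for $x = (x_0,\dots,x_n)$.

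The key steps are then: (i) verify that for every nonzero $x$ the band part has rank exactly $r$. Surjectivity of the $r \times (r+n)$ band matrix onto $k^r$ at every point is precisely the content of the example in Proposition \ref{P-surj psi}(1) with $b = r$, $a = r+n$; since the kernel is a bundle, the rank is constant equal to $r$, and padding by zeros does not change the rank. So $M$ has constant rank $r$. (ii) Check $\dim M = n+1 = a-r+1$: the map $x \mapsto M(x)$ is linear and injective (the forms $x_0, \dots, x_n$ appear as distinct entries), so the image has dimension $n+1$. (iii) Note the trivial boundary cases: $r = a$ gives $n = 0$, a single invertible scalar matrix, $\dim M = 1 = a-r+1$; and $r=1$ gives the space of $1 \times a$ rows, which has dimension $a$. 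These are consistent with the general construction.

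I expect essentially no genuine obstacle here: the statement is a soft lower bound realized by an explicit example, and every ingredient is already available in the excerpt. The only point requiring a little care is making the matrix genuinely $a \times a$ rather than $r \times a$ while preserving constant rank $r$ — but padding with a zero block is harmless, since rank is unaffected and the parametrizing forms are untouched. Alternatively, one can avoid even this bookkeeping by invoking Lemma \ref{L-ec splits} in reverse: take $\ec = r.\oc$ (the case $k=0$), which forces the exact sequence $0 \to F \to a.\oc(-1) \to r.\oc \to 0$ with $\psi$ the band matrix above on $\mathbb{P}^{a-r}$, globally generated and of the right rank, and then $E$ is obtained by composing $r.\oc \hookrightarrow a.\oc$ with a complement; but the direct matrix description is cleaner, so that is the route I would write up.
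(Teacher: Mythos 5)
Your construction is correct and is essentially the paper's own proof: the paper sets $n=a-r$, takes the surjection $a.\oc(-1)\twoheadrightarrow r.\oc$ from Proposition \ref{P-surj psi} (whose proof is exactly your band matrix), and composes with the inclusion $r.\oc\hookrightarrow a.\oc$, which is precisely your zero-padding. Your explicit matrix write-up, with the injectivity check of $x\mapsto M(x)$, is just a spelled-out version of the same argument.
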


\begin{proof} Set $n = a-r$. On $\Pn$ we have a surjective morphism $a.\oc (-1) \stackrel{\oL \psi}{\to} r.\oc$ (Proposition \ref{P-surj psi}). Composing with the inclusion $r.\oc \hookrightarrow r.\oc \oplus (a-r).\oc$, we get $\psi :a.\oc (-1) \to a.\oc$, of constant rank $r$.
\end{proof}

Finally we get:

\begin{proposition}
\label{P-a geq 2r}
(1) We have $l(r;a) \leq max\,\{r+1,a-r+1\}$\\
(2) If $a \geq 2r$, then $l(r;a)= a-r+1$.
\end{proposition}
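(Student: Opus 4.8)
The plan is to combine the uniformity of $\ec_M$ (Lemma \ref{L-ec uniform}) with the classification of low-rank uniform bundles (Theorem \ref{T-unif r<=n+1}) and then invoke Lemma \ref{L-ec splits} to conclude. Suppose $M \subset \operatorname{End}(A)$ has constant rank $r$ and $\dim M = n+1$. If $a \geq 2r$, then $r \leq a-r$; I would like to argue that $\ec_M$, a rank-$r$ uniform bundle on $\Pn$ of splitting type $(-1^c, 0^{r-c})$, is forced to split as a sum of line bundles, after which Lemma \ref{L-ec splits} gives $n+1 \leq a-r+1$, and Lemma \ref{L-l(r,a,a) geq a-r+1} gives the matching lower bound, proving (2). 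Part (1) should be the ``coarse'' version that holds unconditionally: one expects it to follow from the same dichotomy — either $\ec_M$ splits (giving $\dim M \leq a-r+1$ via Lemma \ref{L-ec splits}) or it does not, and in the non-splitting case a direct estimate produces the bound $r+1$.

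First I would dispose of the case $n \leq 1$ (where $\Pn$ is a point or a line and everything is elementary), so assume $n \geq 2$. The key observation is that if $r \leq n$, then every rank-$r$ uniform bundle on $\Pn$ falls under Theorem \ref{T-unif r<=n+1}: it is $\bigoplus^r \oc(a_i)$, or $T(a) \oplus k.\oc(b)$, or $\Omega(a) \oplus k.\oc(b)$ with $k \in \{0,1\}$, or $S^2 T_{\Ptw}(a)$. Now $\ec_M$ has splitting type with entries in $\{-1,0\}$; I would check which of the listed bundles can possibly have such a restricted splitting type. The cotangent twist $\Omega(a)|_L$ and $T(a)|_L$ have splitting types spanning a range of width $2$ (e.g. $T(a)|_L = \oc_L(a+2) \oplus \oc_L(a+1)^{n-1}$), so requiring all entries in $\{-1,0\}$ on a space of dimension $n \geq 2$ forces $a$ to a value that makes one summand have slope outside $\{-1,0\}$ — hence these are excluded unless $n$ is too small; similarly $S^2T_{\Ptw}(a)|_L$ has splitting type $(2a+4, 2a+2, 2a)$, spanning width $4$, which cannot fit in $\{-1,0\}$. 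So the only survivor is $\bigoplus^r \oc(a_i)$, i.e. $\ec_M$ splits, and Lemma \ref{L-ec splits} finishes part (2).

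For part (1), I would run the same classification argument but now without assuming $a \geq 2r$. If $\ec_M$ splits, Lemma \ref{L-ec splits} gives $\dim M \leq a-r+1 \leq \max\{r+1, a-r+1\}$. If $\ec_M$ does not split, then either $r \geq n+1$ (the classification does not apply, and the hypothesis $r \leq n$ fails, giving $\dim M = n+1 \leq r+1$ directly), or $r \leq n$ and $\ec_M$ is one of the non-split bundles in Theorem \ref{T-unif r<=n+1}; in the latter subcase the constraint that the splitting type of $\ec_M$ lie in $\{-1,0\}$ should pin $n$ down to a small explicit value (essentially $n \leq r$ again, with equality only in degenerate configurations), yielding $\dim M = n+1 \leq r+1$. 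Either way $\dim M \leq \max\{r+1, a-r+1\}$.

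The main obstacle I anticipate is the bookkeeping in the second and third paragraphs: one must carefully list the possible splitting types of each bundle in Theorem \ref{T-unif r<=n+1} (as a function of the twist $a$ and of $n$, $k$), intersect each with the admissible set $\{(-1)^c, 0^{r-c}\}$, and verify that in every case either the bundle is a sum of line bundles or $n$ is small enough that $n+1 \leq r+1$. This is routine but must be done cleanly. A secondary point is making sure the edge cases ($r = n$, $r = n+1$, $k=1$, the sporadic $S^2T_{\Ptw}$ on $\Ptw$) are all covered and that the two bounds in (1) genuinely interlock to give $\max\{r+1, a-r+1\}$ with no gap.
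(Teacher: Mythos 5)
Your overall skeleton (split case handled by Lemma \ref{L-ec splits}, non-split case forcing $n\le r$, and Lemma \ref{L-l(r,a,a) geq a-r+1} for the lower bound in (2)) matches the paper's proof, but the step you make load-bearing is false. You exclude $T(a)\oplus k.\oc (b)$ and $\Omega (a)\oplus k.\oc (b)$ on the grounds that their splitting types span ``a range of width $2$'' and so cannot lie in $\{-1,0\}$. In fact $T(a)_{|L}=\oc _L(a+2)\oplus (n-1).\oc _L(a+1)$ involves only two consecutive integers, so $T(-2)$ has splitting type $(0,-1^{n-1})$ and $\Omega (1)$ has splitting type $(-1,0^{n-1})$: both are admissible, and both genuinely occur as $\ec _M$ (this is exactly what happens in Lemma \ref{L-Tango} and throughout the proof of Proposition \ref{P-conjecturale}). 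If your exclusion were correct, every $\ec$ of rank $r\le n$ coming from a constant rank space would split, and Lemma \ref{L-ec splits} would then give $l(n;2n-1)\le n$, contradicting Lemma \ref{L-Tango}, which gives $l(n;2n-1)=n+1$. Only $S^2T_{\Ptw}(a)$ is killed by a splitting-type argument (its type is $(a+4,a+3,a+2)$, three consecutive integers, not the formula you wrote), and it could just as well be killed by rank.

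The correct mechanism — and the one the paper uses — is rank, not splitting type: in Theorem \ref{T-unif r<=n+1} the non-split bundles all have rank at least $n$, so when $r<n$ (strictly) the bundle $\ec$ must be a direct sum of line bundles, and then Lemma \ref{L-ec splits} gives $n\le a-r$. That is all that is needed: for (1), if $r\ge a-r$ and $r<n$ you get the contradiction $r<n\le a-r$, hence $n\le r$ and $l(r;a)\le r+1$; if $a-r\ge r$ and $n>a-r$ then $n>r$, the same splitting argument gives $n\le a-r$, a contradiction, hence $l(r;a)\le a-r+1$. Part (2) then follows from (1) together with Lemma \ref{L-l(r,a,a) geq a-r+1}, since $\max\{r+1,a-r+1\}=a-r+1$ when $a\ge 2r$. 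Note that the case $r=n$ with $\ec$ non-split (e.g.\ $\ec =\Omega (1)$ or $T(-2)$) must be allowed to survive; it is harmless because it only yields $n+1=r+1$, so your case analysis should be organized around the strict inequality $r<n$ rather than $r\le n$.
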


\begin{proof} (1) Assume $r+1 \geq a-r+1$. If $\dim (M)=l(r,a)=n+1$ and if $r < n$, then (\cite{Elencwajg-Hirschowitz-Schneider}) $\ec$ is a direct sum of line bundles and $n \leq a-r$. But then $r < n \leq a-r$, against our assumption. So $r+1 \geq n+1=l(r;a)$.

If $a-r \geq r$. If $n > a-r$, then $n > r$ and this implies that $\ec$ is a direct sum of line bundles. Hence $n \leq a-r$.

(2) We have $max\,\{r+1,a-r+1\}= a-r+1$ if $ a \geq 2r$. So $l(r;a) \leq a-r+1$ by (1). We conclude with Lemma \ref{L-l(r,a,a) geq a-r+1}.
\end{proof}

\begin{remark} Proposition \ref{P-a geq 2r} was first proved (by a different method) by Beasley \cite{Beasley}.
\end{remark}

Very few indecomposable rank $r$ vector bundles with $r<n$ are known on $\Pn$ ($n>4$). One of these is the bundle of Tango (see \cite{OSS}, p. 84 for details). We will use it to prove:

\begin{lemma}
\label{L-Tango}
We have $l(t+1; 2t+1)=t+2$.
\end{lemma}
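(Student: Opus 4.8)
The plan is to prove the lower bound $l(t+1;2t+1)\geq t+2$ by exhibiting an explicit space of matrices via the Tango bundle, and the upper bound $l(t+1;2t+1)\leq t+2$ using the classification of uniform bundles together with Lemma \ref{L-ec splits}. Note that here $a=2t+1$, $r=t+1$, so that $a-r=t$, $a-r+1=t+1$, and the claim is that one can do exactly one better than the trivial bound of Lemma \ref{L-l(r,a,a) geq a-r+1}.

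For the lower bound, recall that the Tango bundle is an indecomposable rank $n-1$ vector bundle on $\Pn$ (for the relevant $n$) sitting in an exact sequence of the shape $0\to \oc(-1)\to (n+1).\oc \oplus \text{(something)} \to \ldots$; more precisely I would use the standard presentation producing, on $\mathbb{P}^{t+1}$ or the appropriate $\Pn$ with $n=a-r+1=t+1$, a morphism $a.\oc(-1)\to a.\oc$ of constant rank $r$ whose image is (a twist of) the Tango bundle. The point of using Tango rather than a split bundle is precisely that it is indecomposable of rank $r<n$, so it is available as an $\ec_M$ on $\Pn$ with $n=t+1=a-r+1$, giving $\dim M = n+1 = t+2$. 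I would spell out the matrix or the exact sequence explicitly: one takes the defining sequence of the Tango bundle on $\mathbb{P}^{t+1}$, checks the ranks match ($\ec$ rank $r=t+1$, $F$ and $E$ rank $a-r=t$), and reads off the constant-rank-$r$ morphism $\psi: a.\oc(-1)\to a.\oc$.

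For the upper bound, suppose $M\subset \mathrm{End}(A)$ has constant rank $r=t+1$ with $\dim M = n+1$, and suppose for contradiction that $n+1 \geq t+3$, i.e. $n \geq t+2 = (a-r)+2 > r$ (since $a-r=t$ and $r=t+1$ means $n>r$ fails only by... let me recheck: $n\geq t+2$ and $r=t+1$, so $n\geq r+1>r$). Thus $r<n$, and by the theorem of Elencwajg--Hirschowitz--Schneider (or Theorem \ref{T-unif r<=n+1} applied on a general plane, as invoked in the proof of Proposition \ref{P-a geq 2r}), a uniform bundle of rank $r<n$ on $\Pn$ must be a direct sum of line bundles; hence $\ec_M$ splits. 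Then Lemma \ref{L-ec splits} forces $\dim M \leq a-r+1 = t+1$, contradicting $\dim M \geq t+3$. Hence $l(t+1;2t+1)\leq t+2$, and combined with the construction we get equality.

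The main obstacle is the lower bound: one must verify that the Tango bundle genuinely occurs as the $\ec_M$ of some constant-rank space in exactly the format of sequence (\ref{eq:exact seq}) on the right projective space, i.e. that its minimal resolution has the form $0\to F\to a.\oc(-1)\to a.\oc\to E\to 0$ with $F,E$ vector bundles of rank $a-r$ and $\ec=\mathrm{Im}$ of rank $r$ and constant rank $r$ along every line. This requires knowing the cohomology/resolution of Tango's bundle precisely (its splitting type on lines must be $(-1^c,0^{r-c})$ as in Lemma \ref{L-ec uniform}), and matching the numerics $a=2t+1$, $n=t+1$; everything else (the upper bound) is a direct application of results already quoted. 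A secondary technical point is ensuring $n\geq 2$ so that Theorem \ref{T-unif r<=n+1} applies, which holds as soon as $t\geq 1$; the small cases $t=0,1$ can be checked by hand if needed.
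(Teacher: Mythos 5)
The upper-bound half of your argument is fine, but it is just a re-proof of Proposition \ref{P-a geq 2r}(1) (with $r=t+1$, $a=2t+1$ one has $\max\{r+1,a-r+1\}=t+2$), which the paper simply quotes. The whole content of the lemma is the lower bound, and there you have a genuine gap: you never construct the example, you only say you ``would use the standard presentation producing \dots a morphism $a.\oc(-1)\to a.\oc$ of constant rank $r$ whose image is (a twist of) the Tango bundle,'' and you yourself flag this as the main obstacle. Worse, the route you sketch cannot work as stated: you need the image $\ec_M$ to have rank $r=t+1$ on $\Pn$ with $n=t+1$, but the Tango bundle on $\mathbb{P}^{t+1}$ has rank $n-1=t$, so no twist of it can be the image of a constant-rank-$(t+1)$ map; its rank matches the kernel $F$ (rank $a-r=t$), not $\ec$. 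Moreover $\ec_M$ must be uniform of splitting type $(-1^c,0^{r-c})$ by Lemma \ref{L-ec uniform}, which also rules out casting Tango's bundle in that role.

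The paper's construction assigns the Tango bundle the opposite role. Set $n=t+1$ and let $\tc$ be the Tango bundle on $\Pn$, with defining sequence $0\to T(-2)\to (2n-1).\oc\to\tc\to 0$; dualizing gives a surjection $(2n-1).\oc(-1)\twoheadrightarrow \Omega(1)$ with kernel $\tc\dual(-1)$ (this is exactly what Tango buys: $\Omega(2)$ generated by only $2n-1$ sections). Composing with the injection $\Omega(1)\hookrightarrow (n+1).\oc\oplus(n-2).\oc=(2n-1).\oc$ coming from the Euler sequence plus trivial summands yields $\psi:(2n-1).\oc(-1)\to(2n-1).\oc$ of constant rank $n$, i.e.\ a space of dimension $n+1=t+2$: here $\ec=\Omega(1)$, $F=\tc\dual(-1)$, $E=\oc(1)\oplus(n-2).\oc$. (The case $n=2$ is done separately via $T_{\Ptw}(-2)\simeq\Omega_{\Ptw}(1)$.) Note that some indecomposable ingredient of rank $a-r<n$ is forced --- if both $F$ and $E$ split, so does $\ec$ and Lemma \ref{L-ec splits} caps the dimension at $t+1$ --- but that ingredient is the kernel, not the image. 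So your proposal, as written, does not establish the lower bound.
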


\begin{proof} By Proposition \ref{P-a geq 2r} we know that $l(t+1;2t+1) \leq t+2$. So it is enough to give an example. Set $n=t+1$ and assume first $n \geq 3$. If $\tc$ denotes the Tango bundle, then we have: $0 \to T(-2) \to (2n-1).\oc \to \tc \to 0$. Dualizing we get $0 \to \tc \dual (-1) \to (2n-1).\oc (-1) \to \Omega (1) \to 0$. Combining with the exact sequence: $0 \to \Omega (1) \to (n+1).\oc \oplus (n-2).\oc \to \oc (1)\oplus (n-2).\oc \to 0$, we get a morphism $(2n-1).\oc (-1) \to (2n-1).\oc$, of constant rank $n$.

If $n=2$, using the fact that $T(-2)\simeq \Omega (1)$, from Euler's sequence, we get $3.\oc (-1) \to 3.\oc$, whose image is $T(-2)$.
\end{proof}

\begin{remark} Lemma \ref{L-Tango} was first proved by Beasley (\cite{Beasley}), by a different method.
\end{remark}

Finally on the opposite side, when $r$ is big compared with $a$, we have:

\begin{proposition}\emph{(Sylvester \cite{Sylvester})}\\
\label{L-l(a-1;a)}
We have:
$$l(a-1;a) = \left\{ \begin{array}{l}
2\,\,\,\,if \,\,a\,\,is\,\,even\\
3\,\,\,\,if \,\,a\,\,is\,\,odd \end{array}\right.$$
\end{proposition}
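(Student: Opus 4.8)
The plan is to read the statement through the dictionary of Section~1 and split it into an upper bound and a matching construction. If $M\subset End(A)$ has constant rank $r=a-1$ and $\dim M=n+1$, then in (\ref{eq:exact seq}) the bundles $F=F_M$ and $E=E_M$ have rank $a-r=1$, hence are \emph{line bundles}; write $F\simeq\oc(e)$, $E\simeq\oc(f)$. (Global generation of $E$ gives $f=c_1(E)\geq 0$ and $\oc(e)\hookrightarrow a.\oc(-1)$ gives $e\leq -1$, though I will not need these.) So everything reduces to (i) bounding $n$ and (ii) attaining the bound.

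For (i) I would run the Chern class computation. From $0\to\ec\to a.\oc\to E\to 0$ one gets $c(\ec)=c(E)^{-1}$, and substituting into $0\to F\to a.\oc(-1)\to\ec\to 0$ gives
\[
1+eh=(1-h)^{a}(1+fh)\qquad\text{in }A^{*}(\Pn)=\bZ[h]/(h^{n+1}),
\]
$h$ the hyperplane class. The coefficient of $h$ yields $e=f-a$; for $2\leq j\leq n$ the vanishing of the coefficient of $h^{j}$ reads $\binom{a}{j}=f\binom{a}{j-1}$, i.e.\ $f=(a-j+1)/j$. If $n\geq 2$ the case $j=2$ forces $f=(a-1)/2$, in particular $a$ odd; if moreover $n\geq 3$ the case $j=3$ forces $f=(a-2)/3$, and $(a-1)/2=(a-2)/3$ has no solution with $a\geq 1$. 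Hence $n\leq 2$ always, and $n=2$ only when $a$ is odd; so $l(a-1;a)\leq 2$ for $a$ even and $\leq 3$ for $a$ odd.

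For (ii), Lemma~\ref{L-l(r,a,a) geq a-r+1} already gives $l(a-1;a)\geq a-r+1=2$ for every $a$, so it remains to exhibit, for $a=2t+1$ odd, a $3$-dimensional constant-rank-$2t$ subspace. I would take $V=V_{2t}$, the irreducible $(2t+1)$-dimensional $\mathfrak{sl}_{2}$-module (the $2t$-th symmetric power of the standard one), and let $M\subset End(V)$ be the image of the faithful representation $\mathfrak{sl}_{2}\hookrightarrow End(V)$, so $\dim M=3$. Over an algebraically closed field every nonzero $X\in\mathfrak{sl}_{2}$ is, up to a nonzero scalar, conjugate either to the standard nilpotent --- which acts on $V$ as a single Jordan block of size $2t+1$, hence with rank $2t$ --- or to $H=\mathrm{diag}(1,-1)$, which acts on $V$ with the $2t+1$ distinct eigenvalues $2t-2i$ ($0\leq i\leq 2t$), exactly one of them $0$, hence again with rank $2t$. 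Rank being invariant under conjugation and scaling, every nonzero element of $M$ has rank $2t=a-1$, whence $l(a-1;a)\geq 3$. (Concretely $M$ is spanned by the two nilpotent shift matrices and $\mathrm{diag}(-t,-t+1,\dots,t)$: a family of tridiagonal $(2t+1)\times(2t+1)$ matrices with identically vanishing determinant.) Comparing the two bounds finishes the proof. The Chern bookkeeping and the $\mathfrak{sl}_{2}$ facts are routine; the only step I expect to require an idea rather than computation is (ii), the choice of the module $V_{2t}$ (equivalently the tridiagonal family) --- once it is on the table the verification is immediate.
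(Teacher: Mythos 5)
Your proof is correct and the upper bound is exactly the Chern-class computation that the paper (following Sylvester) indicates: with $F,E$ line bundles, $1+eh=(1-h)^a(1+fh)$ forces $f=(a-1)/2$ when $n\geq 2$ (so $a$ odd) and gives a contradiction when $n\geq 3$. The existence part for odd $a$, which the paper leaves to the citation, is correctly supplied by your $\mathfrak{sl}_2$-module $V_{2t}$ (equivalently the tridiagonal family), where every nonzero element is conjugate, up to scalar, to the regular nilpotent or to $H$ and hence has rank $2t$.
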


The proof is a Chern classes computation. The next case $a=r-2$ is more involved and there are only partial results:

\begin{proposition}\emph{(Westwick \cite{Westwick-2})}\\
\label{P-Westwick}
We have $3 \leq l(a-2;a) \leq 5$. Moreover:
\begin{enumerate}
\item $l(a-2;a,a) \leq 4$ except if $a \equiv 2, 10 \pmod{12}$ where it could be $l(a-2;a,a)=5$.
\item If $a \equiv 0 \pmod{3}$, then $l(a-2;a,a)=3$.
\item If $a \equiv 1 \pmod{3}$, we have $l(2;4,4)=3$ and $l(8;10,10)=4$ (so $a$ doesn't determine $l(a-2;a,a)$)
\item If $a \equiv 2 \pmod{3}$, then $l(a-2;a,a) \geq 4$. Moreover if $a \not \equiv 2 \pmod{4}$, then $l(a-2;a,a)=4$.
\end{enumerate}
\end{proposition}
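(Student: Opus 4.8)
This proposal concerns Proposition \ref{P-Westwick}, Westwick's result on $l(a-2;a)$.

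\medskip

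The plan is to attack the statement through the vector bundle dictionary of Section 1. If $M \subset \mathrm{End}(A)$ has constant rank $a-2$ and $\dim M = n+1$, then on $\Pn$ we get the exact sequence (\ref{eq:exact seq}) with $F, E$ of rank $2$ and $\ec = \mathrm{Im}(\psi)$ of rank $a-2$, uniform of splitting type $(-1^c, 0^{a-2-c})$ with $c = c_1(E)$ by Lemma \ref{L-ec uniform}. First I would establish the upper bound $l(a-2;a) \leq 5$: this follows because if $n = \dim M - 1 \geq a-2$, i.e. $\ec$ has rank $< n$, then by Elencwajg-Hirschowitz-Schneider (as invoked in the proof of Proposition \ref{P-a geq 2r}) $\ec$ splits, forcing $\dim M \leq a - (a-2) + 1 = 3$ by Lemma \ref{L-ec splits}; combined with part (1) of Proposition \ref{P-a geq 2r}, $l(a-2;a) \leq \max\{(a-2)+1, 3\}$, and the interesting case is when this max is $a-1$, so one must rule out $n \in \{5, 6, \dots, a-2\}$. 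For $n$ in this range $\ec$ still has rank $\geq n$ only for small $n$, so the real content is: for $n \geq 5$, a rank $(n-1)$ or higher uniform bundle fitting in (\ref{eq:exact seq}) with rank-$2$ cokernel cannot exist — here I would use Theorem \ref{T-unif r<=n+1} for $\ec$ when $a-2 \leq n+1$, and Chern class / Schur polynomial obstructions otherwise. The lower bound $l(a-2;a) \geq 3$ is immediate from Lemma \ref{L-l(r,a,a) geq a-r+1} (take $n = a-(a-2) = 2$).

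\medskip

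For the refined congruence statements, the governing invariant is $c = c_1(E) = c_1(\ec^\vee)$, and the key identity comes from comparing Chern/Segre classes in (\ref{eq:exact seq}): since $a.\oc(-1) \to b.\oc$ has $\ec$ as image with $E$ the cokernel and $F$ the kernel, one gets $c(F)^{-1} c(E) = (1+h)^{-a}$ wait — more carefully, $c(E) = c(F) \cdot (1-h)^{-a}$ on $\Pn$ truncated, which for rank-$2$ bundles $E, F$ forces strong divisibility constraints on $c_1, c_2$ modulo the relation $h^{n+1} = 0$. Working these out: writing $c_1(E) = c$, the vanishing of the degree-$(n+1), \dots$ terms of $c(F)(1-h)^{-a}$ beyond degree $2$ gives congruences on $\binom{a}{\cdot}$-type binomials, and it is precisely the $2$-adic and $3$-adic valuations of these binomial coefficients (Kummer's theorem) that produce the moduli $12 = 4 \cdot 3$, $3$, and $4$ appearing in items (1)-(4). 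I would organize this as: (a) derive the polynomial identity; (b) extract the constraint that $n+1 \leq$ (first nonvanishing obstruction degree); (c) translate into statements about $\nu_2$ and $\nu_3$ of binomials $\binom{a}{k}$; (d) read off the four cases. The existence halves of (3) and (4) — $l(8;10) = 4$, and $l(a-2;a) \geq 4$ when $a \equiv 2 \pmod 3$ — require explicit constructions: for the generic $a \equiv 2 \pmod 3$ bound one builds a $4$-dimensional space, i.e. a constant-rank morphism on $\Pun[3]$... rather on $\mathbb{P}^3$, likely using $\Omega$ or $S^2 T_{\Ptw}$ twisted appropriately, exploiting that $S^2 T_{\Ptw}$ has rank $3$ and appears in Theorem \ref{T-unif r<=n+1}.

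\medskip

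The main obstacle I anticipate is the nonexistence direction for intermediate $n$ (roughly $5 \leq n \leq a-3$) when $\ec$ has rank $a-2 > n+1$, since then the uniform bundle classification of Theorem \ref{T-unif r<=n+1} does not apply and one is thrown back on delicate Chern class positivity (Schur polynomial) inequalities for the rank-$2$ bundles $E$ and $F$ on $\Pn$ — one must show the system has no solution in the relevant degree range, and keeping track of which congruence class of $a$ leaves a loophole (the $a \equiv 2, 10 \pmod{12}$ exception in (1), where the obstruction genuinely vanishes) is where the bookkeeping is sharpest. A secondary difficulty is verifying that the low-dimensional constructions actually achieve constant rank $a-2$ rather than dropping rank on some subvariety of $\bP(M)$; this needs an explicit check that the relevant determinantal loci are empty, analogous to the rank computation in the proof of Proposition \ref{P-surj psi}(1).
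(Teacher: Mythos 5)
You have the right starting point --- the relation $c(F)=c(E)(1-h)^a$, truncated modulo $h^{n+1}$, with $E,F$ of rank $2$ --- but the proposal stops exactly where the actual proof begins, and the roadmap you lay out for the part you call hardest is misdirected. Since $c(E)$ and $c(F)$ both terminate in degree $2$, every coefficient of $h^3,h^4,h^5,\dots$ in $c(E)(1-h)^a$ must vanish as soon as that power of $h$ is nonzero on $\Pn$. Writing $c(E)=1+t_1h+t_2h^2$: the $h^3$ coefficient gives $t_2=(a-1)(3t_1-a+2)/6$; the $h^4$ coefficient gives $(a+1)(a-2-2t_1)=0$, hence $t_1=(a-2)/2$ and $t_2=(a-1)(a-2)/12$ once $n\geq 4$; and the $h^5$ coefficient gives $(a+1)(a+2)=0$ once $n\geq 5$ --- an outright contradiction, which is the entire proof that $l(a-2;a)\leq 5$. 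No uniform-bundle classification, no Schur-polynomial positivity, and no case-by-case elimination of $n\in\{5,\dots,a-2\}$ is needed (nonexistence in dimension $6$ already kills all larger dimensions, since a subspace of a constant-rank space has constant rank); the ``main obstacle'' you anticipate for intermediate $n$, where you say one is ``thrown back on delicate Chern class positivity,'' is precisely where this elementary rank-$2$ truncation argument works with no extra input. Your plan would have you doing harder work than the statement requires.

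The congruence items are also not derived in the proposal, and the mechanism you guess is off. The moduli do not come from $2$-adic and $3$-adic valuations of binomial coefficients via Kummer's theorem: item (1) is simply integrality of $t_1=(a-2)/2$ (so $a$ is even) and of $t_2=(a-1)(a-2)/12$ (so $a\equiv 1,2,5,10\pmod{12}$), whence $a\equiv 2,10\pmod{12}$ when a $5$-dimensional space exists (i.e.\ on $\Pq$); item (2) is the observation that on $\Pt$, with $a=3m$, the relation $6t_2=(3m-1)(3t_1-3m+2)$ is impossible modulo $3$, so $l(a-2;a)\leq 3$, with equality by Lemma \ref{L-l(r,a,a) geq a-r+1}. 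Finally, the existence halves of (3) and (4) (e.g.\ $l(8;10)=4$ and the $4$-dimensional spaces for $a\equiv 2\pmod 3$) are not obtained by bundle constructions in the paper at all --- they are quoted from Westwick's explicit matrices --- and your suggestion to manufacture them from $\Omega$ or $S^2T_{\Ptw}$ is left entirely unsubstantiated. So the proposal correctly names the key identity but proves none of the four items, and its organization of the nonexistence part would need to be replaced by the direct coefficient computation above.
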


\begin{proof} We have $C(F)=C(E)(1-h)^a$. Let $C(F)=1+s_1h+s_2h^2$, $C(E)=1+t_1h+t_2h^2$. We get $s_1 = t_1-a$ (coefficient of $h$); $s_2 = a(a-1)/2-at_1+t_2$ (coefficient of $h^2$). From the coefficient of $h^3$ it follows that: $t_2 = (a-1)[3t_1-a+2]/6$. The coefficient of $h^4$ yields after some computations: $(a+1)(a-2-2t_1)=0$.
It follows that $t_1=\dfrac{(a-2)}{2}$ and $t_2 = \dfrac{(a-1)(a-2)}{12}$ if we are on $\Pn , n \geq 4$. Finally the coefficient of $h^5$ gives $(a+1)(a+2)=0$, showing that $l(a-2;a) \leq 5$.

If we are on $\Pq$, from $t_1=(a-2)/2$ we see that $a$ is even. From $t_2=(a-1)(a-2)/12$, we get $a^2+2-3a \equiv 0 \pmod{12}$. This implies $a\equiv 1,2,5,10 \pmod{12}$. Since $a$ is even we get $a\equiv 2, 10 \pmod{12}$.

If $a=3m$ and if we are on $\Pt$, then $6t_2 = (3m-1)(3t_1-3m+2) \equiv 0 \pmod{6}$, which is never satisfied. So $l(a-2,a) \leq 3$ in this case.

The other statements follow from the construction of suitable examples, see \cite{Westwick-2}.
\end{proof}

\begin{remark}
\label{R-a-2}
On $\Pq$ a rank two vector bundle with $c_1=0$ has to verify the Schwarzenberger condition $c_2(c_2+1) \equiv 0 \pmod{12}$. If $l(a-2;a)= 5$ for some $a$, then $a = 12m+2$ or $a=12m+10$. In the first case the condition yields $m \equiv 0,5,8,9 \pmod{12}$, in the second case $m \equiv 2,3,6,11 \pmod{12}$. So, as already noticed in \cite{Westwick-2}, the lowest possible value of $a$ is $a=34$. This would give an indecomposable rank two vector bundle with Chern classes $c_1=0, c_2 = 24$. Indeed if we have an exact sequence (\ref{eq:exact seq}), $E$ and $F$ cannot be both a direct sum of line bundles (because, by Theorem \ref{Evans-Griffith}, $\ec$ would also be a direct sum of line bundles, which is impossible).  
\end{remark}

Finally we have:

\begin{proposition}\emph{(Westwick \cite{Westwick})}\\
\label{P-2a-2r+1}
For every $a,r$, $l(r;a) \leq 2a - 2r+1$
\end{proposition}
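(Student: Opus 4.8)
The plan is to reprove Westwick's inequality by his elementary perturbation argument, phrased so as to interact with the exact sequence (\ref{eq:exact seq}). Set $m:=a-r$; we must show $\dim M\le 2m+1$. Everything rests on the following remark, valid for any $\alpha,\xi\in M$ with $\alpha\ne 0$ (so $\operatorname{rk}\alpha=r$): fix decompositions $A=\ker\alpha\oplus C$, with $\alpha|_C$ an isomorphism onto $W:=\operatorname{im}\alpha$, and $A=W\oplus W'$, and write $\xi=\begin{pmatrix}\xi_{WU}&\xi_{WC}\\ \xi_{W'U}&\xi_{W'C}\end{pmatrix}$ in the induced block form ($U:=\ker\alpha$). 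Then (i) $\xi_{W'U}=0$, i.e. $\xi(\ker\alpha)\subseteq\operatorname{im}\alpha$; and (ii) for $t\ne 0$ small, $\operatorname{rk}(\alpha+t\xi)=r+\operatorname{rk}\bigl(\xi_{W'C}\,(\alpha_{WC}+t\xi_{WC})^{-1}\,\xi_{WU}\bigr)$. Statement (i) holds because otherwise $\operatorname{im}(\alpha+t\xi)$ would have dimension $>r$ for small $t\ne 0$; granting (i), statement (ii) follows from Gaussian elimination with the block $\alpha_{WC}+t\xi_{WC}$, which is invertible for small $t$. In the language of (\ref{eq:exact seq}), (i) is just the assertion that the subbundle $F_M(1)\subseteq a.\oc$ is mapped into $\ec_M$ by every constant endomorphism arising from $M$.

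Now fix $\alpha\in M\setminus\{0\}$ and consider $\rho:M\to\operatorname{Hom}(\ker\alpha,\operatorname{im}\alpha)$, $\xi\mapsto\xi|_{\ker\alpha}$ (well defined by (i)). If $0\ne\xi\in\ker\rho$ then $\ker\alpha\subseteq\ker\xi$ and $\dim\ker\xi=m=\dim\ker\alpha$, so $\ker\xi=\ker\alpha$; hence $\ker\rho$ is a space of \emph{injective} maps $A/\ker\alpha\cong k^r\to A$ of constant rank $r$. For such a space the bundle $\ec$ of (\ref{eq:exact seq}) is isomorphic to $r.\oc(-1)$, a direct sum of line bundles, so by the argument of Lemma \ref{L-ec splits} we get $\dim\ker\rho\le a-r+1=m+1$. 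By the same argument applied to transposes, the subspace $\{\xi\in M:\operatorname{im}\xi\subseteq\operatorname{im}\alpha\}$ consists of surjections onto $k^r$ and has dimension $\le m+1$, while its intersection with $\ker\rho$ is a space of automorphisms of $k^r$ and so has dimension $\le 1$ (a pencil cannot consist of invertible matrices over an algebraically closed field). Since $\dim M=\dim\ker\rho+\dim\rho(M)$, it remains to prove $\dim\rho(M)\le m$.

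For the last point one uses (ii): expanding $(\alpha_{WC}+t\xi_{WC})^{-1}$ in powers of $t$ produces, for every $\xi\in M$ and every $j\ge 0$, the ``Kronecker'' relations $\sigma(\xi)\,\theta(\xi)^{j}\,\phi(\xi)=0$, where $\phi(\xi):=\alpha_{WC}^{-1}\xi_{WU}\in\operatorname{Hom}(\ker\alpha,C)$ (a copy of $\rho(\xi)$), $\theta(\xi):=\alpha_{WC}^{-1}\xi_{WC}\in\operatorname{End}(C)$ and $\sigma(\xi):=\xi_{W'C}$; equivalently the least $\theta(\xi)$-invariant subspace of $C$ containing $\operatorname{im}\phi(\xi)$ lies in $\ker\sigma(\xi)$. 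Feeding these relations, together with the dimension bounds above, into an induction on $r$ — applied to $\ker\rho$, to the surjection space, and, after a generic choice of $\alpha$ that renders $\rho(M)$ of constant rank, to $\rho(M)$ itself — yields $\dim\rho(M)\le m$ and hence $\dim M\le 2m+1=2a-2r+1$. I expect this last step to be the main obstacle: the estimate $\dim\rho(M)\le m$ degrades by a factor $2$ if one exploits only how $\ker\xi$ moves away from $\ker\alpha$, or only the dual ``image side'', so the induction has to use both at once, and one must check that a generic $\alpha$ actually removes the rank jumping of $\rho(M)$. (When $r\le\dim M$ there is a partial shortcut: $\ec_M$ is then a uniform bundle of rank $\le\dim M$, so Theorem \ref{T-unif r<=n+1} lists the possibilities, and in each of them the relation $c(F_M)\,c(\ec_M)=(1-h)^{a}$ in $H^\ast(\Pn)$ — compare the proof of Proposition \ref{P-Westwick} — forces $n\le 2m$; but this leaves the range $r>\dim M$ to the perturbation argument.)
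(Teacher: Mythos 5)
Your preliminary reductions are fine: claim (i) ($\xi(\ker\alpha)\subseteq\operatorname{im}\alpha$ for all $\xi\in M$) is the standard perturbation fact, and the bound $\dim\ker\rho\le m+1$ is correct, since a space of everywhere-injective maps $k^r\to k^a$ has $\ec\simeq r.\oc(-1)$ and the argument of Lemma \ref{L-ec splits} (via Proposition \ref{P-surj psi}) applies. But the proof collapses exactly where you say you expect trouble: the reduction ``it remains to prove $\dim\rho(M)\le m$'' is not only unproved, it is false as stated. Take $a=3$, $r=2$, $m=1$ and $M$ the $3$-dimensional space of skew-symmetric $3\times 3$ matrices (constant rank $2$, meeting Westwick's bound $2m+1=3$). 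For any $\alpha\in M\setminus\{0\}$, say with $\ker\alpha=\langle e_1\rangle$ and $\operatorname{im}\alpha=\langle e_2,e_3\rangle$, one computes $\rho(M)=\operatorname{Hom}(\ker\alpha,\operatorname{im}\alpha)\cong k^2$, so $\dim\rho(M)=2>m$, while $\ker\rho=\langle\alpha\rangle$. Thus the two separate estimates ($\dim\ker\rho\le m+1$ and $\dim\rho(M)\le m$) cannot both be forced; only a joint bound on $\dim\ker\rho+\dim\rho(M)$ can be true, and your sketch of how to obtain it (``feeding the Kronecker relations into an induction on $r$'', after a generic choice of $\alpha$) is a programme, not an argument --- genericity of $\alpha$ does not help in the example above, which is homogeneous under $SO_3$. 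This joint bookkeeping of kernels and images is precisely the hard content of Westwick's paper, and it is missing here.

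For comparison, the paper itself does not reprove the proposition: it attributes it to Westwick and observes, following Ilic--Landsberg (Theorem 1.4), that it drops out of Lazarsfeld's theorem on ample vector bundles applied to the exact sequence (\ref{eq:exact seq}) --- in bundle language, the rank-$m$ quotient $E$ of $a.\oc$ (equivalently the ampleness of $F\dual(-1)$-type restrictions) forces $n\le 2m$. If you want a self-contained argument in the spirit of this paper, that cohomological/positivity route is much shorter than repairing the perturbation induction; your parenthetical remark for the range $r\le n+1$ (using Theorem \ref{T-unif r<=n+1} and $c(F)c(\ec)=(1-h)^a$) is indeed workable, but as you note it leaves the range $r>\dim M$ untouched, which is where the conclusion is needed.
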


As noticed in \cite{Ilic-Landsberg} (Theorem 1.4) this follows directly from a result of Lazarsfeld on ample vector bundles. We will come back later on this bound.

\section{Further results and a conjecture.}

There are examples, for every $n \geq 2$, of uniform but non homogeneous vector bundles on $\Pn$ of rank $2n$ (\cite{Drezet}. However it is a long standing conjecture that every uniform vector bundle of rank $r < 2n$ is homogeneous. Homogeneous vector bundles of rank $r < 2n$ on $\Pn$ are classified (\cite{Ballico-Ellia-homo}, so the conjecture can be formulated as follows:

\begin{conjecture}
\label{Cj-homo}
Every rank $r < 2n$ uniform vector bundle on $\Pn$ is a direct sum of bundles chosen among: $S^2T_{\Ptw}(a)$, $\wedge ^2T_{\Pq}(b)$, $T_{\Pn}(c)$, $\Omega _{\Pn}(d)$, $\oc _{\Pn}(e)$; where $a,b,...,e$ are integers.
\end{conjecture}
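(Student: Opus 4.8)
Since this is a long\-standing open conjecture, what follows is necessarily an attack plan rather than a proof; indeed the paper does not prove it, but only \emph{translates} it via Proposition \ref{P-conjecturale}. By Theorem \ref{T-unif r<=n+1} the range $r \le n+1$ is settled, and for $n=2$ this already exhausts all $r < 4$, so the content is concentrated in the range $n+2 \le r \le 2n-1$ with $n \ge 3$. The natural strategy is induction on the rank.

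The first step is the standard restriction\-to\-lines machinery. Let $\mathbb{F} = \{(x,L) : x \in L\}$ be the flag variety of pointed lines of $\Pn$, with projections $p : \mathbb{F} \to \Pn$ and $q : \mathbb{F} \to \mathbb{G}$, $\mathbb{G} = \mathbb{G}(1,n)$ the Grassmannian of lines. If $E$ is uniform of splitting type $(a_1 > a_2 \ge \cdots \ge a_r)$, one shows that the subsheaf of $E$ which on each line $L$ is the maximal sub\-bundle isomorphic to a sum of copies of $\oc_L(a_1)$ glues to an honest sub\-bundle $F_1 \subset E$ on all of $\Pn$ (this is the core technical input of the theory of uniform bundles: $q_*$ of a suitable twist of $p^*E$ is locally free on $\mathbb{G}$, and cohomology and base change reconstruct $F_1$). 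Then $F_1$ has constant splitting type $(a_1^{\,m_1})$, so $F_1 \simeq m_1 \cdot \oc_{\Pn}(a_1)$ by the standard fact that a bundle trivial on every line is trivial, and the quotient $E/F_1$ is again uniform of rank $r - m_1 < r$; by the inductive hypothesis it is a direct sum of bundles from the list $S^2T_{\Ptw}(a)$, $\wedge^2 T_{\Pq}(b)$, $T_{\Pn}(c)$, $\Omega_{\Pn}(d)$, $\oc_{\Pn}(e)$.

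It then remains to analyse the extension $0 \to m_1 \cdot \oc_{\Pn}(a_1) \to E \to E/F_1 \to 0$ and to decide when the middle term is itself uniform. This reduces to computing $\operatorname{Ext}^1(E/F_1,\, \oc_{\Pn}(a_1))$ summand by summand, i.e. the groups $H^1(\Pn, \Omega_{\Pn}(j))$, $H^1(\Pn, (S^2T_{\Ptw})^\vee(j))$, $H^1(\Pn, (\wedge^2 T_{\Pq})^\vee(j))$ and so on, all of which are explicitly known; one must then show that a non\-split class yields a uniform bundle only when it assembles the summands into one of the prescribed homogeneous bundles (an Euler\-type extension rebuilding $T_{\Pn}$, or the analogous extension for $\wedge^2 T_{\Pq}$), and never into a genuinely new uniform bundle of rank $< 2n$.

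The main obstacle is precisely this last step, and it is where the hypothesis $r < 2n$ must enter essentially: Dr\'ezet's rank $2n$ examples show that for $r = 2n$ non\-homogeneous uniform bundles exist, so any correct argument is forced to break down exactly at $r = 2n$. Phrased differently, the difficulty is to prove that a uniform bundle is \emph{rigid} — determined by its restrictions to lines up to the evident freedom — once the rank is small compared with $n$; the inductive bookkeeping above is essentially formal, but this rigidity is the genuine unknown and presumably needs a new ingredient, such as a quantitative bound on the instability of $E$ along lines, or a representation\-theoretic rigidity statement for the bundles that $q_*$ produces on $\mathbb{G}$.
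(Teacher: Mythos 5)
This statement is a conjecture in the paper: the authors give no proof of it (they only record that it is known for $n\leq 3$ and use it conditionally in Proposition \ref{P-conjecturale}), and you correctly say so, so no complete argument can be expected from you. Judged as an attack plan, however, your sketch mislocates the difficulty. The step you present as ``the core technical input of the theory'' --- that the line-by-line maximal destabilizing part glues to an honest subbundle $F_1\subset E$ on all of $\Pn$ --- is precisely the unproven crux in the range $n+2\leq r\leq 2n-1$. The relative Harder--Narasimhan filtration of $p^*E$ lives on the flag variety $\bF$ and descends through $p$ only if, for each point $x$, the subspace $F_1(x,L)\subset E_x$ is independent of the line $L\ni x$; in all known cases (rank $\leq n+1$, rank $5$ on $\Pt$) this constancy is exactly where the rank hypothesis is spent, via the non-existence of non-constant morphisms from the $\bP^{n-1}$ of lines through $x$ to the relevant Grassmannian. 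It is not a general fact about uniform bundles.

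Indeed the counterexamples at $r=2n$ already defeat this first step, not merely the later extension analysis. Take Elencwajg's rank $4$ uniform bundle $\ec$ on $\Ptw$ of splitting type $(-1^2,0^2)$ (the very bundle appearing in the remark after Conjecture \ref{Cj-l(r;a)}): if its trivial part descended to a subbundle $F_1\simeq 2.\oc$, the quotient would be uniform of splitting type $(-1,-1)$, hence $2.\oc(-1)$, and since $\mathrm{Ext}^1(\oc(-1),\oc)=H^1(\oc(1))=0$ the bundle would split as $2.\oc\oplus 2.\oc(-1)$, contradicting non-homogeneity. So the hypothesis $r<2n$ must enter before your $\mathrm{Ext}^1$ bookkeeping, at the descent/rigidity stage, and the assertion that ``the inductive bookkeeping above is essentially formal'' is not accurate: what you defer to the end (rigidity of the family $L\mapsto F_1(x,L)$) is the conjecture's actual content, while the extension analysis (which, note, also cannot be done summand by summand for non-split extensions of direct sums) is the comparatively routine part.
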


The conjecture holds true if $n \leq 3$ (\cite{Elencwajg}, \cite{Ballico-Ellia}).

Before to go on we point out an obvious but useful remark.

\begin{remark}
\label{R-D-1}
Clearly an exact sequence (\ref{eq:exact seq}) exists if and only if the dual sequence twisted by $\oc (-1)$ exists. So we may replace $\ec$ by $\ec \dual (-1)$. If $\ec$ has splitting type $(-1^c,0^b)$, $\ec \dual (-1)$ has splitting type $(0^c,-1^b)$.
\end{remark}

\begin{proposition}
\label{P-conjecturale}
(1) Take $r,n$ such that $n \leq r < 2n$. Assume $a-r < n$ and that every rank $r$ uniform bundle on $\Pn$ is homogeneous. Then $l(r;a) \leq n$, except if $r=n, a=2n-1$ in which case $l(n;2n-1) = n+1$.\\
(2) Assume Conjecture \ref{Cj-homo} is true. Then $l(r;a)=a-r+1$ for $r < (2a+2)/3$, except if $r =(a+1)/2$, in which case $l(r;a)=a-r+2$.
\end{proposition}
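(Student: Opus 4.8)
The plan is to establish (1) by a case analysis of the bundle $\ec_M$ using the classification of uniform bundles, and then to deduce (2) by applying (1) with the particular value $n=a-r+1$.

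\emph{Part (1).} A linear subspace of a space of constant rank $r$ again has constant rank $r$, and $\bP(M)$ contains linear subspaces of every dimension; hence it suffices to show that no $M\subset End(A)$ of constant rank $r$ has $\dim M=n+1$, \emph{except} when $r=n$, $a=2n-1$, in which case $l(n;2n-1)=n+1$ already follows from Lemma \ref{L-Tango} (lower bound) and Proposition \ref{P-a geq 2r} (upper bound). So assume $\dim M=n+1$: we get (\ref{eq:exact seq}) on $\Pn$ with $\ec=\ec_M$ uniform of splitting type $(-1^c,0^{r-c})$ (Lemma \ref{L-ec uniform}), and with $E,F$ vector bundles of rank $a-r$. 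Since $r<2n$, the hypothesis together with the classification of homogeneous bundles of rank $<2n$ shows that $\ec$ is a direct sum of twists of $S^2T_{\Ptw}$, $\wedge^2T_{\Pq}$, $T_{\Pn}$, $\Omega_{\Pn}$, $\oc$. Each summand has splitting type a submultiset of $(-1^c,0^{r-c})$, so all its entries lie in $\{-1,0\}$; comparing with the splitting types of the bundles in the list, the only possible summands are $\oc$, $\oc(-1)$, $T_{\Pn}(-2)$, $\Omega_{\Pn}(1)$, and, only if $n=4$, $\wedge^2T_{\Pq}(-3)$.

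If $\ec$ is a direct sum of line bundles, Lemma \ref{L-ec splits} gives $n+1\le a-r+1$, contradicting $a-r<n$. So $\ec$ has a summand $W$ of ``twisted'' type, and exactly one such (two would force $r\ge 2n$, since they have rank $n$, resp. $6$ on $\Pq$): thus $\ec=W\oplus k_0.\oc\oplus k_1.\oc(-1)$, and after possibly replacing $\ec$ by $\ec\dual(-1)$ (Remark \ref{R-D-1}) we may assume $W=\Omega_{\Pn}(1)$ or $W=\wedge^2T_{\Pq}(-3)$. Now $E,F$ have rank $a-r$, so $c_i(E)=c_i(F)=0$ for $i>a-r$, while $c(E)=c(\ec)^{-1}$ and $c(F)=(1-h)^a c(\ec)^{-1}$ in $\bZ[h]/(h^{n+1})$, which converts the constraints into polynomial identities.

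For $W=\Omega_{\Pn}(1)$ one has $c(\ec)=(1-h)^{k_1}/(1+h)$, hence $c(E)=(1+h)(1-h)^{-k_1}$. If $k_1\ge 1$ all coefficients of $c(E)$ in degrees $1,\dots,n$ are positive, forcing $a-r\ge n$, impossible; so $k_1=0$ and $c(F)=(1+h)(1-h)^a$, whose coefficient of $h^i$ vanishes precisely for $i=(a+1)/2$. As $c_i(F)$ must vanish for every $i$ with $a-r<i\le n$, and this range is nonempty, it reduces to the single value $(a+1)/2$; hence $a-r=n-1$ and $n=(a+1)/2$, i.e. $r=n$, $a=2n-1$, with $k_0=0$ and $\ec=\Omega_{\Pn}(1)$ — exactly the exceptional case, realised by the Tango construction in the proof of Lemma \ref{L-Tango}. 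For $W=\wedge^2T_{\Pq}(-3)$ on $\Pq$, a $K$-theory computation gives $c(\ec)=(1+2h)(1+h)^{-5}(1-h)^{k_1}$; here $r<8$ forces $k_0+k_1\le 1$, so $a-r\le 3$, and in each of the few resulting cases $c_4(E)\ne 0$ — for instance $c(E)=(1+h)^5(1+2h)^{-1}=1+3h+4h^2+2h^3+h^4$ when $k_0=k_1=0$ — contradicting $\mathrm{rk}(E)=a-r<4$. This proves (1).

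\emph{Part (2).} Assuming Conjecture \ref{Cj-homo}, the hypothesis of (1) holds for every $n$. If $r\le a/2$, i.e. $a\ge 2r$, then $l(r;a)=a-r+1$ by Proposition \ref{P-a geq 2r}(2). If $(a+1)/2<r<(2a+2)/3$, apply (1) with $n=a-r+1$: then $n\le r$ (as $r\ge(a+1)/2$), $r<2n$ (as $r<(2a+2)/3$) and $a-r<n$, and $r>(a+1)/2$ gives $r>n$, so we are not in the exceptional case; hence $l(r;a)\le a-r+1$, and equality holds by Lemma \ref{L-l(r,a,a) geq a-r+1}. Finally if $r=(a+1)/2$, then $a=2r-1$, so $a-r+2=r+1$, and $l(r;2r-1)=r+1$ by Lemma \ref{L-Tango} and Proposition \ref{P-a geq 2r}. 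Every integer $r<(2a+2)/3$ lies in one of these three ranges, so (2) follows. The main obstacle is the Chern-class bookkeeping in (1): one must be certain the splitting-type constraint really forces the short list of summands, and the $\Omega_{\Pn}(1)$ case must be handled carefully, since there the outcome is not an outright contradiction but the precise identification of the exceptional parameters $r=n$, $a=2n-1$.
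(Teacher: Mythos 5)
Your proof is correct and follows the paper's overall strategy (reduce to the classification of homogeneous bundles with splitting type in $\{-1,0\}$, dualize to assume the twisted summand is $\Omega(1)$ or $\wedge^2T_{\Pq}(-3)$, then force $r=n$, $a=2n-1$ and invoke Lemma \ref{L-Tango}), but the central step is carried out differently. Where you rule out $\oc(-1)$ summands by pure Chern-class positivity ($c(E)=(1+h)(1-h)^{-k_1}$ has strictly positive coefficients up to degree $n$ when $k_1\geq 1$, contradicting $c_n(E)=0$ since $\mathrm{rk}(E)=a-r<n$), the paper instead shows $H^i_*(E)=0$ for $1\le i\le n-2$, applies the Evans--Griffith splitting criterion to write $E=\bigoplus\oc(a_i)$, and counts sections to force $k=r-n$ and $E=\oc(1)\oplus(a-r-1).\oc$; only then does it compute $c(F)=(1+h)(1-h)^a$ and extract $a=2n-1$, $r=n$ from $c_n(F)=c_{n-1}(F)=0$. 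Your "unique vanishing coefficient" argument on $c(F)$ reaches the same exceptional parameters in one stroke and avoids Evans--Griffith entirely in this case, at the cost of not identifying $E$ and $\ec$ explicitly (which the paper gets as a byproduct but does not need). In the $n=4$ case your treatment is in fact slightly more complete: the paper's case list records only $\ec\simeq(\wedge^2\Omega)(2)$ and disposes of it by reducing to $a=9$ via the Koszul sequence, whereas you also allow the rank-$7$ possibilities $\wedge^2T_{\Pq}(-3)\oplus\oc$ and $\wedge^2T_{\Pq}(-3)\oplus\oc(-1)$ (which the rank bound $r<8$ does permit) and kill them by the same $c_4(E)\neq 0$ computation --- you only display the $k_0=k_1=0$ expansion, but the remaining coefficient checks ($c_4(E)=11$ in the $\oc(-1)$ case) do go through. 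Part (2) is handled as in the paper, with the three ranges $r\le a/2$, $r=(a+1)/2$, $(a+1)/2<r<(2a+2)/3$ made explicit.
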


\begin{proof}
(1) In order to prove the statement it is enough to show that there exists no subspace $M$ of constant rank $r$ and dimension $n+1$ under the assumption $a-r < n$, $n \leq r < 2n$ (except if $r=n, a=2n-1$, in which case $l(n;2n-1)=n+1$ by Lemma \ref{L-Tango}).

Such a space would give an exact sequence (\ref{eq:exact seq}) with $\ec$ uniform of rank $r < 2n$ on $\Pn$. If $\ec$ is a direct sum of line bundles, by Lemma \ref{L-ec splits} we get $l(a;r)=a-r+1 <n+1$. Hence $\ec$ is not a direct sum of line bundles. Since the splitting type of $\ec$ is $(-1^c,0^{r-c})$ (Lemma \ref{L-ec uniform}), we see that: $\ec \simeq \Omega (1)\oplus k.\oc \oplus (r-k-n).\oc (-1)$, $\ec \simeq T(-2)\oplus t.\oc \oplus (r-t-n).\oc (-1)$, or, if $n=4$, $\ec \simeq (\wedge ^2 \Omega )(2)$.
\medskip

Let's first get rid of this last case. The assumption $a-r < n$ implies $a \leq 9$. It is enough to show that there is no exact sequence (\ref{eq:exact seq}) on $\Pq$, with $\ec = (\wedge ^2 \Omega )(2)$ and $a=9$. From $0 \to \ec \to 9.\oc \to E\to 0$, we get $\cc (E) = \cc (\ec )^{-1}$. From the Koszul complex we have $0 \to \ec \to \wedge ^2 V \otimes \oc \to \Omega (2) \to 0$. It follows that $\cc (E) = \cc (\Omega (2))$. Since $rk(E)=3$ and $c_4(\Omega _{\Pq}(2))=1$, we get a contradiction.
\medskip 

So we may assume $\ec \simeq \Omega (1)\oplus k.\oc \oplus (r-k-n).\oc (-1)$ or $\ec \simeq T(-2)\oplus t.\oc \oplus (r-t-n).\oc (-1)$. By dualizing the exact sequence (\ref{eq:exact seq}), we may assume $\ec \simeq \Omega (1)\oplus k.\oc \oplus (r-k-n).\oc (-1)$. The exact sequence (\ref{eq:exact seq}) yields:
$$0 \to \Omega (1)\oplus (r-n-k).\oc (-1) \to (a-k).\oc \to E \to 0\,\,\,(*)$$

Since $H^i_*(\Omega )=0$ for $2\leq i \leq n-1$, from the exact sequence $(*)$ we get $H^i_*(E)=0$, for $1 \leq i \leq n-2$. Since $rk(E)=a-r < n$, it follows from Evans-Griffith's theorem that $E \simeq \bigoplus \oc (a_i)$. We have $a_i \geq 0$, $\forall i$, because $E$ is globally generated. Moreover one $a_i$ at least must be equal to 1 (otherwise $h^1(E\dual \otimes \ec )=0$ and the sequence $(*)$ splits, which is impossible). So $a_1=1$, $a_i\geq 0, i > 1$. It follows that $h^0(E) \geq (n+1)+(a-r-1)=n+a-r$. On the other hand $h^0(E)=a-k$ from $(*)$. 

If $k < r-n$, we see that one of the $a_i$'s, $i > 1$, must be $>0$. This implies $h^0(E) \geq 2(n+1)+(a-r-2) = 2n+a-r$. So $a-k = h^0(E) \geq 2n+a-r$. Since $a \geq a-k$, it follows that $a \geq 2n+a-r$ and so $r \geq 2n$, against our assumption.

We conclude that $k=r-n$ and $E = \oc (1)\oplus (a-r-1).\oc$. In particular $\ec = \Omega (1)\oplus (r-n).\oc$ ($(*)$ is Euler's sequence plus some isomorphisms). We turn now to the other exact sequence:
$$0 \to F \to a.\oc (-1) \to \Omega (1)\oplus (r-n).\oc \to 0\,\,\,(+)$$
We have $\cc (F) = (1-h)^a.\cc (\Omega (1))^{-1}$. Here $\cc (F) = 1+c_1h+...+c_nh^n$ is the Chern polynomial of $F$ (computations are made in $\bZ [h]/(h^{n+1}$). From the Euler sequence $\cc (\Omega (1))^{-1}=1+h$. It follows that:
$$\cc (F) = (1+h).\left( \sum _{i=0}^a \binom{a}{i}(-1)^ih^i\right)$$
Since $rk(F) = a-r < n$, $c_n(F)=0$. Since $a \geq r \geq n$, it follows that $\dbinom{a}{n}=\dbinom{a}{n-1}$. This implies $a=2n-1$.   

Observe that $r \geq n$ (because $k=r-n \geq 0$). If $r \geq n+1$, then $rk(F) \leq n-2$, hence $c_{n-1}(F)=0$. This implies: $\dbinom{2n-1}{n-1}=\dbinom{2n-1}{n-2}$, which is impossible.

We conclude that $r=n$ and $a=2n-1$, so we are looking at $l(n;2n-1)$. By Lemma \ref{L-Tango} we know that $l(n;2n-1)= n+1$.
\medskip

This proves (1).
\medskip

(2) Now we apply (1) by setting $n:= a-r+1$. Clearly $n > a-r$. The condition $n \leq r < 2n$ translates in: $(a+1)/2 \leq r < (2a+2)/3$. So, under these assumptions, we get $l(r;a) \leq n=a-r+1$, except if $r=n$, $a=2n-1$. In this latter case we know that $l(n;2n-1)=n+1$ (Lemma \ref{L-Tango}). We conclude with Lemma \ref{L-l(r,a,a) geq a-r+1}. 
\end{proof}

Since Conjecture \ref{Cj-homo} is true for $r \leq n+1$ and $n=3, r=5$ (\cite{Ballico-Ellia}), we may summarize our results as follows:

\begin{theorem}\quad \\
\label{T-le thm}
\noindent(1) If $r \leq a/2$, then $l(r,a) = a-r+1$\\
(2) If $a$ is odd, $l(\frac{a+1}{2};a) = \frac{a+1}{2}+1$ $(=a-r+2$)\\
(3) If $ \frac{(2a+2)}{3} > r \geq \frac{a}{2}+1$, then $l(r;a) \leq r-1$.\\
(4) If $a$ is even: $l(\frac{a}{2}+1;a)= \frac{a}{2}$ $(=a-r+1)$.\\
(5) If $r \geq (2a+2)/3$, then $l(r,a) \leq 2(a-r)+1$\\
(6) We have $l(5;7)=3$ ($=a-r+1$). 
\end{theorem}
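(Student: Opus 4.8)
The plan is to show $l(5;7)=3$ by combining the lower bound $l(5;7) \geq a-r+1 = 3$ from Lemma \ref{L-l(r,a,a) geq a-r+1} with the matching upper bound $l(5;7) \leq 3$. For the upper bound we must rule out the existence of a subspace $M \subset \mathrm{End}(A)$, $\dim A = 7$, of constant rank $5$ and dimension $n+1 = 4$, i.e. an exact sequence (\ref{eq:exact seq}) on $\mathbb{P}^3$ with $a=b=7$, $r=5$, so $\mathrm{rk}\,F = \mathrm{rk}\,E = 2$ and $\mathrm{rk}\,\ec = 5$. Note that here $r = 5 = 2n - 1 < 2n = 6$, so $\ec$ is a uniform bundle of rank $< 2n$ on $\mathbb{P}^3$, and by the remark after Conjecture \ref{Cj-homo} (the case $n=3$, \cite{Elencwajg}, \cite{Ballico-Ellia}) $\ec$ is in fact homogeneous, hence appears in the list of Conjecture \ref{Cj-homo}.

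First I would invoke Lemma \ref{L-ec uniform}: the splitting type of $\ec$ is $(-1^c, 0^{5-c})$ with $c = c_1(E) \geq 0$. If $\ec$ is a direct sum of line bundles, Lemma \ref{L-ec splits} already gives $\dim M \leq a - r + 1 = 3 < 4$, a contradiction; so $\ec$ is not a sum of line bundles. Going through the classification of homogeneous (equivalently, by the $n=3$ case, uniform) bundles on $\mathbb{P}^3$ of rank $5$ with splitting type consisting only of $0$'s and $-1$'s, the only non-split summand that can occur is $T_{\mathbb{P}^3}(-2) \cong \Omega_{\mathbb{P}^3}(1)$ up to the twist forced by the splitting type (note $S^2T_{\mathbb{P}^2}$ lives on $\mathbb{P}^2$, and $\wedge^2 T$ requires $\mathbb{P}^4$), and also $T_{\mathbb{P}^3}(-1)$ has splitting type $(1, 0, 0)$ which is excluded. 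So $\ec \simeq \Omega_{\mathbb{P}^3}(1) \oplus k.\oc \oplus (5 - k - 3).\oc(-1) = \Omega(1)\oplus k.\oc\oplus(2-k).\oc(-1)$; by Remark \ref{R-D-1} we may take this form rather than the dual $T(-2)\oplus\cdots$. This is exactly the situation analyzed in the proof of Proposition \ref{P-conjecturale}(1) with $n=3$, $r=5$, $a=7$ — indeed $a - r = 2 < 3 = n$ and $3 = n \leq r = 5 < 2n = 6$ — and that argument forces $k = r - n = 2$, $E = \oc(1)\oplus(a-r-1).\oc = \oc(1)\oplus\oc$, $\ec = \Omega(1)\oplus 2.\oc$, and then the Chern class computation on the sequence $0 \to F \to 7.\oc(-1)\to \Omega(1)\oplus 2.\oc \to 0$ gives $a = 2n-1 = 5$, contradicting $a = 7$.

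So in fact this case is entirely subsumed by Proposition \ref{P-conjecturale}(1): set $n = 3$, $r = 5$; the hypotheses $a - r = 2 < 3 = n$ and $n \leq r < 2n$ hold, and every rank $5$ uniform bundle on $\mathbb{P}^3$ is homogeneous by \cite{Ballico-Ellia}, so the proposition yields $l(5;7)\leq n = 3$ (we are not in the exceptional case $r = n$, since $5 \neq 3$). Combined with Lemma \ref{L-l(r,a,a) geq a-r+1}, $l(5;7) = 3$. The only thing to double-check — and the one mild obstacle — is that the classification input really is available for rank $5$ on $\mathbb{P}^3$: rank $5 = n+2$ exceeds the $r \leq n+1$ range of Theorem \ref{T-unif r<=n+1}, but the text already asserts (just before Theorem \ref{T-le thm}) that Conjecture \ref{Cj-homo} holds for $n = 3, r = 5$ via \cite{Ballico-Ellia}, which is precisely what makes the hypothesis of Proposition \ref{P-conjecturale}(1) legitimate here. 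Hence the proof is a one-line application once that citation is in hand.
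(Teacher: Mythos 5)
Your treatment of part (6) is correct and is essentially the paper's own proof: the upper bound $l(5;7)\leq 3$ is exactly Proposition \ref{P-conjecturale}(1) applied with $n=3$, $r=5$, $a=7$ (hypotheses $a-r=2<n$ and $n\leq r<2n$ hold, and the needed input that rank $5$ uniform bundles on $\mathbb{P}^3$ are homogeneous is the cited result of \cite{Ballico-Ellia}, since rank $5=n+2$ is indeed outside Theorem \ref{T-unif r<=n+1}), and the lower bound is Lemma \ref{L-l(r,a,a) geq a-r+1}. Your recap of the internal mechanism of Proposition \ref{P-conjecturale} (forcing $\ec=\Omega(1)\oplus 2.\oc$ and then $\binom{a}{n}=\binom{a}{n-1}$, i.e.\ $a=2n-1=5\neq 7$) is accurate, though redundant once the proposition is invoked as a black box.

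The genuine gap is that the statement to be proved is the full six-part theorem, and you prove only part (6). In the paper, (1) is Proposition \ref{P-a geq 2r}, (2) is Lemma \ref{L-Tango}, and (5) is Proposition \ref{P-2a-2r+1}; these are citations, but they must at least be invoked. More importantly, the genuinely new items (3) and (4) require an argument you do not supply: for (3) one sets $n=r-1$ and applies Proposition \ref{P-conjecturale}(1), using that uniform bundles of rank $r=n+1$ on $\mathbb{P}^n$ are homogeneous (Theorem \ref{T-unif r<=n+1}), after checking that $r\geq \frac{a}{2}+1$ gives $a-r<n$, that $n\leq r<2n$ holds for $r\geq 3$, and disposing of the degenerate case $r\leq 2$ (which forces $r=a=2$, $l(2;2)=1$); then (4) follows from (3) together with Lemma \ref{L-l(r,a,a) geq a-r+1}, since for $a$ even and $r=\frac{a}{2}+1$ one has $a-r+1=\frac{a}{2}=r-1$, so the upper and lower bounds meet. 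None of these verifications appear in your proposal, so as a proof of Theorem \ref{T-le thm} it is incomplete, even though the one part you address is done correctly and by the same route as the paper.
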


\begin{proof} (1) This is Proposition \ref{P-a geq 2r}.\\
(2) This is Lemma \ref{L-Tango}.\\
(3) Set $n=r-1$. Uniform vector bundles of rank $r=n+1$ on $\Pn$ are homogeneous. We have $n \leq r < 2n$ if $r \geq 3$ and $a-r < n$ if $r \geq (a/2)+1$. If $r \leq 2$ and $r \geq (a/2)+1$, then $a \leq 2$. Hence $r=a=2$ and $l(2;2)=1$. So the assumption of Proposition \ref{P-conjecturale}, (1) are fulfilled. We conclude that $l(r,a) \leq r-1$.\\
(4) Follows from (3) and Lemma \ref{L-l(r,a,a) geq a-r+1}.\\
(5) This is Proposition \ref{P-2a-2r+1}.\\
(6) Since uniform vector bundles of rank 5 on $\Pt$ are homogeneous, this follows from Proposition \ref{P-conjecturale} (1) and Lemma \ref{L-l(r,a,a) geq a-r+1}.
\end{proof}

\begin{remark} Point 3 of the theorem improves the previous bound of Beasley but we don't expect this bound to be sharp (see Conjecture \ref{Cj-l(r;a)}). Points 4 and 6 also are new. The bound of (5) is so far the best known bound in this range. It is reached for some values of $a$ in the case $r=a-1$ (Proposition \ref{L-l(a-1;a)}), but already in the case $r=a-2$ we don't know if it is sharp.
\end{remark}

It is natural at this point to make the following:

\begin{conjecture}
\label{Cj-l(r;a)} Let $a,r$ be integers such that $(2a+2)/3 > r > (a/2)+1$, then $l(r;a) = a-r+1$.
\end{conjecture}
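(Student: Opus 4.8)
\medskip
\noindent\emph{Proof proposal.} The inequality $l(r;a)\geq a-r+1$ holds for every $r$ by Lemma \ref{L-l(r,a,a) geq a-r+1}, so the entire content is the reverse inequality $l(r;a)\leq a-r+1$. The plan is to run Proposition \ref{P-conjecturale}(1) with the choice $n:=a-r+1$. In the range $(a/2)+1<r<(2a+2)/3$ this $n$ satisfies all the hypotheses of that proposition: $n>a-r$ is automatic; $n\leq r$ is $r\geq (a+1)/2$, which follows from $r>(a/2)+1$; and $r<2n$ is exactly $r<(2a+2)/3$. Moreover the exceptional case $r=n$, $a=2n-1$ of Proposition \ref{P-conjecturale}(1) would force $r=(a+1)/2$, hence is excluded since $r>(a/2)+1>(a+1)/2$. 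Therefore, \emph{provided} every rank $r$ uniform bundle on $\Pn$ is homogeneous, Proposition \ref{P-conjecturale}(1) gives $l(r;a)\leq n=a-r+1$, and combined with Lemma \ref{L-l(r,a,a) geq a-r+1} this is the assertion. (This is nothing but Proposition \ref{P-conjecturale}(2), with the proviso $r\neq (a+1)/2$ now automatic.)

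So, modulo facts already established, the statement is precisely the restriction of Conjecture \ref{Cj-homo} to $\Pn$ with $n=a-r+1$; and by Lemma \ref{L-ec uniform} one in fact only needs it for bundles of splitting type $(-1^c,0^{r-c})$. This is where the difficulty sits, and I expect it to be the main obstacle: the relevant rank is $r=a-n+1$, which over this range runs up to $2n-1$, whereas the unconditional classification of uniform bundles on $\Pn$ is available only for rank $\leq n+1$ (Theorem \ref{T-unif r<=n+1}). An unconditional proof thus seems to require genuinely new information on uniform bundles of rank between $n+2$ and $2n-1$, not merely a recombination of the tools used above.

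For small $a$ one can nevertheless try to sidestep the conjecture, because $n=a-r+1$ together with $(a/2)+1<r<(2a+2)/3$ confines $n$ to the band $(a+1)/3<n<a/2$; for $a\leq 10$ this leaves only $(r,a)=(5,7),(6,9),(7,10)$, i.e. $n=3$ and $n=4$. Since Conjecture \ref{Cj-homo} is known for $n\leq 3$, the case $n=3$ (that is, $l(5;7)=3$) is already unconditional (Theorem \ref{T-le thm}(6)), so the genuine work is the rank-$6$ and rank-$7$ cases on $\Pq$. For these I would argue directly from the sequence (\ref{eq:exact seq}): the cokernel $E$ has rank $a-r=n-1<n$, so as soon as one knows $H^i_*(E)=0$ for $1\leq i\leq n-2$ — which in turn amounts to ruling out, by a cohomology computation, the few ``exotic'' uniform bundles that could occur as $\ec$ on $\Pq$ — Evans--Griffith's theorem (Theorem \ref{Evans-Griffith}) forces $E$ to split, hence $\ec$ and $F$ into a short list of candidates, which are then eliminated by Chern-class identities in $\bZ[h]/(h^{n+1})$, exactly in the style of the proofs of Proposition \ref{P-conjecturale}(1) and Proposition \ref{P-Westwick}.
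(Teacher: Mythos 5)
Your proposal is accurate and coincides with the paper's own treatment: the reduction via $n:=a-r+1$ to Proposition \ref{P-conjecturale}(1) (with the exceptional case $r=(a+1)/2$ excluded by $r>(a/2)+1$) is exactly the paper's Proposition \ref{P-conjecturale}(2), and your sketch for small $a$ --- ruling out the possible exotic uniform $\ec$ on $\Pq$, then applying Evans--Griffith to split $E$ and eliminating the remaining candidates by Chern-class identities --- is precisely what Section 4 carries out to get $l(6;9)=l(7;10)=4$. Be aware, though, that this does not prove the statement (nor does the paper: it is stated as a conjecture, established only conditionally on Conjecture \ref{Cj-homo} and unconditionally for $a\leq 10$), and you correctly identify the missing ingredient as knowledge of uniform bundles of rank between $n+2$ and $2n-1$ on $\Pn$.
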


\begin{remark}
This conjecture should be easier to prove than Conjecture \ref{Cj-homo}, indeed in terms of vector bundles it translates as follows: every rank $r < 2n$ uniform vector bundle, $\ec$, fitting in an exact sequence (\ref{eq:exact seq}) on $\Pn$ is homogeneous.
\medskip

By the way the condition $r < 2n$ seems necessary. If $n=2$ this can be seen as follows. Consider the following matrix (taken from \cite{Sylvester}):
$$\Psi = \left( \begin{array}{ccccc}
0 & -x_2 & 0 & -x_0 & 0\\
x_2 & 0 & 0 & -x_1 & -x_0\\
0 & 0 & 0& -x_2 & -x_1\\
x_0 & x_1 & x_2 & 0 & 0\\
0 & x_0 & x_1 & 0 & 0\end{array}\right)$$
It is easy to see that $\Psi$ has rank four at any point of $\Ptw$, hence we get:
$$0 \to \oc (b) \to 5.\oc (-1) \stackrel{\Psi}{\to}5.\oc \to \oc (c) \to 0$$
with $\ec = Im(\Psi )$ a rank four uniform bundle. On the line $L$ of equation $x_2=0$, $\Psi$ can be written:
$$\begin{array}{ccc}
\oc _L(-3)\hookrightarrow & 3.\oc _L(-1)\twoheadrightarrow 2.\oc _L & \\
 & \oplus & \\
 &2.\oc _L(-1) \hookrightarrow 3.\oc _L & \twoheadrightarrow \oc _L(2)\end{array}$$
 It follows that $b=-3, c=2$ and the splitting type of $\ec$ is $(-1^2, 0^2)$. Now rank four homogeneous bundles on $\Ptw$ are classified (Prop. 3, p.18 of \cite{Drezet-Crelle}) and are direct sum of bundles chosen among $\oc (a), T(b), S^2T(c), S^3T(d)$. If $\ec$ is homogeneous the only possibility is $\ec (1) \simeq T(-1)\oplus \oc (1)\oplus \oc$, but in this case the exact sequence $0 \to \oc (-2) \to 5.\oc \to \ec (1) \to 0$, would split, which is absurd. We conclude that $\ec$ is not homogeneous. In fact $\ec$ is one of the bundles found by Elencwajg (\cite{Elencwajg-2}).
\end{remark}

\begin{remark}
\label{R-a<=8} The results of this section and the previous one determine $l(r;a)$ for $a \leq 8$, $1 \leq r \leq a$. To get a complete list for $a \leq 10$, we have to show, according to Conjecture \ref{Cj-l(r;a)}, that $l(6;9) = l(7;10) = 4$. This will be done in the next section.
\end{remark}

\section{Some partial results.}

In the following lemma we relax the assumption $r < 2n$ in Proposition \ref{P-conjecturale} when $c_1(\ec (1))=1$.

\begin{lemma}
\label{L-c1ec(1)=1}
Assume we have an exact sequence (\ref{eq:exact seq}) on $\Pn$, with $rk(F)=a-r < n$  and $c_1(\ec (1))=1$. Then $a = 2n-1$ and $r=n$.
\end{lemma}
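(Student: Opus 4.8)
The plan is to follow the same strategy that drives the proof of Proposition \ref{P-conjecturale}, but to exploit the hypothesis $c_1(\ec(1)) = 1$ directly instead of going through the classification of low-rank uniform bundles. Since the splitting type of $\ec$ is $(-1^c, 0^{r-c})$ with $c = c_1(E)$ (Lemma \ref{L-ec uniform}), the condition $c_1(\ec(1)) = 1$ forces $c_1(E) = c_1(-\ec) = \cdots$; more precisely $\ec(1)$ has splitting type $(0^c, 1^{r-c})$, so $c_1(\ec(1)) = r - c$, and the hypothesis says $r - c = 1$, i.e. $c = r-1$, so $E$ has first Chern class $c_1(E) = r-1$ and rank $a - r \leq n-1$. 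First I would write down the exact sequence $0 \to \ec \to a.\oc \to E \to 0$ coming from (\ref{eq:exact seq}) and the Chern class identity $\cc(E) = \cc(\ec)^{-1}$.

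Next I would try to pin down $E$. The idea is that $\ec(1)$, having splitting type $(0^c, 1^{r-c}) = (0^{r-1}, 1)$, looks cohomologically close to a split bundle, and one wants to run the Evans--Griffith argument exactly as in Proposition \ref{P-conjecturale}. Concretely, after possibly dualizing via Remark \ref{R-D-1} (which turns splitting type $(-1^c, 0^{r-c})$ into $(0^c, -1^{r-c})$ and swaps the roles, so I should check which of $\ec$ or $\ec\dual(-1)$ has the convenient shape), I would argue that $E$ is a direct sum of line bundles: from the exact sequence $0 \to F \to a.\oc(-1) \to \ec \to 0$ or its rearrangement, and the intermediate-cohomology vanishing for the building blocks, one gets $H^i_*(E) = 0$ for $1 \leq i \leq n-2$; since $\mathrm{rk}(E) = a - r < n$, Evans--Griffith (Theorem \ref{Evans-Griffith}) gives $E \simeq \bigoplus \oc(a_i)$ with all $a_i \geq 0$ by global generation. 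The constraint $c_1(E) = r-1$ then limits how the degrees $a_i$ can be distributed, and the requirement that the defining sequence not split (equivalently $h^1(E\dual\otimes\ec)\neq 0$, else (\ref{eq:exact seq}) is trivial, contradicting that $\ec$ is not a sum of line bundles — which in turn would force $a-r\geq n$ by Lemma \ref{L-ec splits}) forces $E$ to have exactly one summand of positive degree, and in fact $E = \oc(1) \oplus (a-r-1).\oc$ and correspondingly $\ec = \Omega(1) \oplus (r-n).\oc$, just as in the cited proof.

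Once $E$ and $\ec$ are identified, the conclusion is a Chern class computation. From $0 \to F \to a.\oc(-1) \to \Omega(1) \oplus (r-n).\oc \to 0$ we get $\cc(F) = (1-h)^a \cc(\Omega(1))^{-1} = (1+h)(1-h)^a$ in $\bZ[h]/(h^{n+1})$, using the Euler sequence. Since $\mathrm{rk}(F) = a - r \leq n-1 < n$, the top Chern class $c_n(F)$ vanishes, which gives $\binom{a}{n} = \binom{a}{n-1}$ and hence $a = 2n-1$; and if $r \geq n+1$ then $\mathrm{rk}(F) \leq n-2$ so also $c_{n-1}(F) = 0$, yielding $\binom{2n-1}{n-1} = \binom{2n-1}{n-2}$, which is false, so $r = n$. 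I expect the main obstacle to be the step identifying $E$: one must be careful that dropping the hypothesis $r < 2n$ does not invalidate the Evans--Griffith input (it does not, since that only uses $\mathrm{rk}(E) < n$) and, more delicately, that without the classification of uniform bundles one still controls the cohomology of $\ec$ well enough to feed into the long exact sequence — here the explicit splitting type $(-1^{r-1},0)$ up to the duality of Remark \ref{R-D-1} should suffice, since $\Omega(1)$ and $\oc$ are the only relevant pieces, but checking that no other configuration of the $a_i$ survives the non-splitting requirement is where the real work lies.
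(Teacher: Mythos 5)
Your reduction of the hypothesis to the splitting type $(-1^{r-1},0)$ for $\ec$, and your concluding Chern class computation ($\cc(F)=(1+h)(1-h)^a$, then $c_n(F)=0$ gives $a=2n-1$ and $c_{n-1}(F)=0$ rules out $r\geq n+1$), are exactly right and agree with the paper. But the central step of your plan has a genuine gap: you propose to get $H^i_*(E)=0$ for $1\leq i\leq n-2$ "from the intermediate-cohomology vanishing for the building blocks" of $\ec$ and then apply Evans--Griffith, exactly as in Proposition \ref{P-conjecturale}. In that Proposition, however, the building blocks of $\ec$ ($\Omega(1)$, $\oc$, $\oc(-1)$) were known \emph{beforehand}, because the hypothesis $r<2n$ together with the homogeneity assumption gave the classification of $\ec$. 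The whole point of Lemma \ref{L-c1ec(1)=1} is that $r$ may be $\geq 2n$, so no classification of uniform bundles is available for $\ec$, and neither of the two exact sequences gives any cohomological control on $\ec$ (both $F$ and $\ec$ are unknown, so the argument is circular). Your closing remark that "$\Omega(1)$ and $\oc$ are the only relevant pieces" is precisely the nontrivial statement that must be proved; as written it is assumed. The paper supplies this missing input in two ways: it cites the rank-free classification of uniform bundles of splitting type $(1,0^{r-1})$ (Ellia's th\`ese, Prop.\ IV.2.2: such a bundle is $\oc(1)\oplus(r-1).\oc$ or $T(-1)\oplus(r-n).\oc$), and it observes that $F(1)\hookrightarrow a.\oc$ has $c_1(F(1))=-1$, hence is automatically uniform of splitting type $(-1,0^{a-r-1})$ and, having rank $<n$, is a sum of line bundles by Theorem \ref{T-unif r<=n+1}; this both pins down $F$ and excludes the case $\ec(1)=\oc(1)\oplus(r-1).\oc$ (which would contradict $a-r<n$ via Lemma \ref{L-ec splits}). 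Some such argument on $F$ (or the splitting-type classification) is indispensable, and your proposal contains neither.

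There is also a bookkeeping slip that signals the dualization was not carried through: you assert $c_1(E)=r-1$ and later that $E=\oc(1)\oplus(a-r-1).\oc$, which is only consistent if $r=2$. In fact it is the \emph{dualized} quotient $F\dual(-1)$ (equivalently $F(1)=\oc(-1)\oplus(a-r-1).\oc$) that has this split form with first Chern class $1$, while in the original sequence $\ec(1)=T(-1)\oplus(r-n).\oc$ and $c_1(E)=r-1$. If you correct this and replace the Evans--Griffith step by the uniformity argument on $F(1)$ sketched above, your outline closes up into the paper's proof.
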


\begin{proof} If $c_1(\ec (1))=1$, $\ec (1)$ has splitting type $(1, 0^{r-1})$. It follows from \cite{Ellia-Thèse}, Prop. IV, 2.2, that $\ec (1) = \oc (1)\oplus (r-1).\oc$ or $\ec (1) = T(-1)\oplus (r-n).\oc$. From the exact sequence $0 \to F(1) \to a.\oc \to \ec(1) \to 0$, we get $c_1(F(1))=-1$. Since $F(1) \hookrightarrow a.\oc$, it follows that $F(1)$ is uniform of splitting type $(-1,0^{a-r-1})$. Since $rk(F) < n$, $F(1) = \oc (-1)\oplus (a-r-1).\oc$. This shows that necessarily $\ec (1) = T(-1)\oplus (r-n).\oc$. Now from the exact sequence: $0 \to T(-1)\oplus (r-n).\oc \to a.\oc (1) \to E(1) \to 0$, we get $\cc (E(1)) = (\cc (t(-1))^{-1}(1+h)^a$, i.e. $\cc (E(1))= (1-h)(1+h)^a$. Since $rk(E) < n$, we have $c_n(E(1))=0$ and arguing as in the proof of Proposition \ref{P-conjecturale}, we get $a=2n-1, r=n$.
\end{proof}

\begin{remark}
\label{R-c1ec(1)=1}
Since we know that $l(n;2n-1)= n+1$ (Lemma \ref{L-Tango}), we may, from now on, assume $c_1(\ec (1)) \geq 2$.
\end{remark}

Since $\ec (1)$ is globally generated, taking $r-1$ general sections we get:
\begin{equation}
\label{eq:r-1}
0 \to (r-1).\oc \to \ec (1) \to \ic _X(b) \to 0
\end{equation}

Here $X$ is a pure codimension two subscheme, which is smooth if $n \leq 5$ and which is irreducible, reduced, with singular locus of codimension $\geq 6$, if $n \geq 6$.

\begin{lemma}
\label{L-X aCM}
Assume $n \geq 3$ and $rk(F) < n$. If $X$ is arithmetically Cohen-Macaulay (aCM), i.e. if $H^i_*(\ic _X) =0$ for $1 \leq i \leq n-2$, then $F$ is a direct sum of line bundles.
\end{lemma}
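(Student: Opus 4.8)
The plan is to verify the Evans-Griffith splitting criterion (Theorem~\ref{Evans-Griffith}) for the dual bundle $F\dual$ rather than for $F$ directly; since the dual of a direct sum of line bundles is again a direct sum of line bundles, this is equivalent to the assertion. As $rk(F\dual)=rk(F)=a-r<n$, it is enough to show $H^i_*(F\dual)=0$ for $1\leq i\leq a-r-1$, and I shall in fact establish the stronger vanishing $H^i_*(F\dual)=0$ for all $1\leq i\leq n-2$.

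The first step is to move the aCM hypothesis from $X$ to $\ec$. Twisting the exact sequence (\ref{eq:r-1}) and using $H^i_*(\oc_{\Pn})=0$ for $1\leq i\leq n-1$, the long exact cohomology sequence identifies $H^i_*(\ec)$ with $H^i_*(\ic_X)$, up to an irrelevant twist, in the range $1\leq i\leq n-2$; hence the hypothesis $H^i_*(\ic_X)=0$ ($1\leq i\leq n-2$) gives $H^i_*(\ec)=0$ for $1\leq i\leq n-2$. The second step moves this to $F$ through the exact sequence $0\to F\to a.\oc(-1)\to\ec\to 0$ extracted from (\ref{eq:exact seq}): the same vanishing $H^i_*(\oc_{\Pn})=0$ ($1\leq i\leq n-1$) yields isomorphisms $H^i_*(F)\cong H^{i-1}_*(\ec)$ for $2\leq i\leq n-1$, so $H^i_*(F)=0$ for $2\leq i\leq n-1$. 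The third step is Serre duality on $\Pn$, under which $H^i_*(F\dual)$ vanishes exactly when $H^{n-i}_*(F)$ does; since $1\leq i\leq n-2$ corresponds to $2\leq n-i\leq n-1$, we obtain $H^i_*(F\dual)=0$ for $1\leq i\leq n-2$, and Evans-Griffith then gives that $F\dual$, hence $F$, is a direct sum of line bundles.

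The only subtle point — and the reason for passing to $F\dual$ — is the behaviour in degree one. The sequence $0\to F\to a.\oc(-1)\to\ec\to 0$ expresses $H^i_*(F)$ through $H^{i-1}_*(\ec)$ only for $i\geq 2$: the group $H^1_*(F)$ is the cokernel of the map on global sections $H^0_*(a.\oc(-1))\to H^0_*(\ec)$, which is not controlled by the intermediate cohomology of $\ec$ and need not vanish a priori, so a naive application of Evans-Griffith to $F$ itself would leave a gap at $i=1$. Working instead with $F\dual$ converts the needed low-degree vanishing into the high-degree vanishing $H^{n-i}_*(F)=0$ with $n-i\leq n-1$, which has already been established, and thereby avoids $H^1_*(F)$ completely. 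The rest is a routine diagram chase in the two exact sequences, using only that $F$ and $\ec$ are locally free and the vanishing of the intermediate cohomology of $\oc_{\Pn}$.
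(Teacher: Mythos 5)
Your proof is correct and takes essentially the same route as the paper's: the aCM hypothesis transferred through (\ref{eq:r-1}) gives $H^i_*(\ec)=0$ for $1\leq i\leq n-2$, this vanishing is moved to $F\dual$, and Evans--Griffith applied to $F\dual$ (of rank $<n$) yields the splitting. The only, immaterial, difference is the order of steps --- the paper applies Serre duality to $\ec$ and then uses the dualized sequence $0\to\ec\dual\to a.\oc(1)\to F\dual\to 0$, while you first deduce $H^i_*(F)=0$ for $2\leq i\leq n-1$ from $0\to F\to a.\oc(-1)\to\ec\to 0$ and then dualize by Serre duality; your remark about $H^1_*(F)$ is exactly the reason both arguments pass to $F\dual$.
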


\begin{proof} From (\ref{eq:r-1}) we get $H^i_*(\ec )=0$ for $1 \leq i \leq n-2$. By Serre duality $H^i_*(\ec \dual )=0$, for $2\leq i \leq n-1$. From the exact sequence $0 \to \ec \dual \to a.\oc (1) \to F\dual \to 0$, we get $H^i_*(F\dual )=0$, for $1 \leq i \leq n-2$. Since $F\dual$ has rank $<n$, by Evans-Griffith theorem we conclude that $F\dual$ (hence also $F$) is a direct sum of line bundles.
\end{proof}

\begin{proposition}
\label{P-b>3}
Assume that we have an exact sequence (\ref{eq:exact seq}) on $\Pq$ with $rk(F) < 4$. Let $(-1^c,0^{r-c})$ be the splitting type of $\ec$. If $r > 4$ and if $F$ is not a direct sum of line bundles, then $c,r-c \geq 4$; in particular $rk(\ec ) \geq 8$.
\end{proposition}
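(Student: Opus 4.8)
The plan is to combine the exact sequence (\ref{eq:r-1}) with the classification of smooth surfaces of low degree in $\Pq$. Throughout $n=4$, so $\dim M=5$; write the splitting type of $\ec$ as $(-1^c,0^{r-c})$ and set $d:=c_1(\ec(1))=r-c$ (Lemma \ref{L-ec uniform}). I would first clear away the reductions. If $d=0$ then $\ec(1)$ is uniform of trivial splitting type, so $\ec(1)\simeq r.\oc$, $\ec\simeq r.\oc(-1)$ is a direct sum of line bundles, and Lemma \ref{L-ec splits} yields $5=\dim M\le a-r+1$, i.e. $rk(F)\ge 4$, contrary to hypothesis; hence $d\ge1$. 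Since $r>4$, Lemma \ref{L-c1ec(1)=1} rules out $d=1$, so (Remark \ref{R-c1ec(1)=1}) we may assume $d\ge2$. Finally, by the symmetry of Remark \ref{R-D-1} it suffices to prove $d=r-c\ge4$: applying this to the dual sequence gives $c\ge4$ (the harmless case in which the ``$F$-term'' $E\dual(-1)$ of the dual happens to split being disposed of directly by a Chern-class computation, as at the end of the proof), whence $rk(\ec)=r=c+d\ge8$.

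So suppose, for contradiction, $d\in\{2,3\}$. Since $\ec(1)$ is a quotient of $a.\oc$ it is globally generated, so $r-1$ general sections give the sequence (\ref{eq:r-1}), $0\to(r-1).\oc\to\ec(1)\to\ic_X(d)\to0$, with $X\subset\Pq$ a smooth surface. I would then compute $\deg X=c_2(\ec(1))$. Restricting $\ec(1)$ to a general plane yields a uniform bundle of splitting type $(0^c,1^d)$, hence of the shape $\alpha.\oc\oplus\beta.\oc(1)\oplus e.T_{\Ptw}(-1)$ with $\beta+e=d$, $\alpha+e=c$, so $0\le e\le\min(c,d)\le d\le3$; a short Chern computation then gives $c_2(\ec(1))=\binom{d}{2}+e$, so that $\deg X=\binom{d}{2}+e\le\binom{d+1}{2}\le6$. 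On the other hand, because $F$ is not a direct sum of line bundles, Lemma \ref{L-X aCM} forces $X$ to be non-aCM.

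Now I would use the geometry of surfaces in $\Pq$. A smooth surface of degree $\le4$ in $\Pq$ is aCM; hence $\deg X\in\{5,6\}$, which forces $d=3$ and $(\deg X,e)\in\{(5,2),(6,3)\}$, so that $X$ is one of the (finitely many, explicitly classified) smooth non-aCM surfaces of degree $5$ or $6$ in $\Pq$ — in degree $5$, the elliptic quintic scroll. For each such $X$ one gets a contradiction from the fact that $\ic_X(3)$, being a quotient of the globally generated bundle $\ec(1)$, is globally generated: for the elliptic quintic scroll the cubics through $X$ do not generate the ideal sheaf at every point of $X$, and the same holds for the pertinent degree-$6$ surfaces, so $\ic_X(3)$ is \emph{not} globally generated. (Alternatively, from $0\to F(1)\to a.\oc\to\ec(1)\to0$ one has $c(F(1))=c(\ec(1))^{-1}$ with $rk(F(1))=rk(F)<4$, so $[c(\ec(1))^{-1}]_4=0$, which is contradicted by the values of $c_3(\ec(1)),c_4(\ec(1))$ read off from $X$.) This proves $d\ge4$.

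Two comments. When $r\le n+1=5$ the surface classification can be bypassed: by Theorem \ref{T-unif r<=n+1}, $\ec(1)$ is then a direct sum of copies of $\oc,\oc(1),T_{\Pq}(-1)$ and, if $d=3$, at most one copy of $\Omega_{\Pq}(2)$ or $\wedge^2T_{\Pq}(-2)$, and one finishes by $c(E)=c(\ec)^{-1}$, $c(F)=c(\ec)^{-1}(1-h)^a$, using $c_i(E)=c_i(F)=0$ for $i>rk(F)=a-r\le3$ (the split case $\ec(1)=\oc(1)^{d}\oplus(r-d).\oc$ being excluded by Lemma \ref{L-ec splits} as in the first paragraph). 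The delicate point of the full proof is exactly the input in the third paragraph: producing the short list of smooth non-aCM surfaces of degree $5$ and $6$ in $\Pq$ and ruling each one out; everything else is a formal reduction or a bounded Chern-class computation.
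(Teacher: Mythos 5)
Your overall strategy (take $r-1$ sections of $\ec (1)$, study the surface $X$, use Lemma \ref{L-X aCM} in the aCM case and explicit non-aCM surfaces otherwise) is the paper's, but the key quantitative and geometric inputs you supply are wrong. The bound $\deg X\le 6$ is obtained by asserting that $\ec (1)$ restricted to a general plane, being uniform of splitting type $(0^c,1^d)$, must be of the form $\alpha .\oc \oplus \beta .\oc (1)\oplus e.T_{\Ptw}(-1)$. Uniform bundles of rank $\geq 4$ on $\Ptw$ are not classified and need not be homogeneous: the paper itself exhibits (remark following Conjecture \ref{Cj-l(r;a)}) a rank $4$ uniform, non-homogeneous bundle of splitting type $(0^2,1^2)$ with $c_2=4$, violating your formula $c_2=\binom{d}{2}+e\le 3$; here $rk\,\ec (1)=r\ge 5$, so the structure claim is unjustified and the bound collapses (note also that a $(3,3)$ complete intersection $X$, of degree $9$, is perfectly compatible with the hypotheses, so no such bound can follow from uniformity alone). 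The paper instead controls $X$ by liaison: $\ic _X(3)$ is globally generated, so $X$ is either a $(3,3)$ complete intersection or linked by two general cubics to a smooth surface of degree $9-\deg X$, and aCM-ness passes through the linkage.

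Two further steps are factually incorrect. First, ``a smooth surface of degree $\le 4$ in $\Pq$ is aCM'' is false: the (isomorphically projected) Veronese surface has degree $4$ and is not aCM (it is not even linearly normal); this is exactly case (i) of the paper, which is eliminated not by general principles but by the computation $\cc (\ec (1))=\cc (\Omega (2))$ coming from $0\to 3.\oc \to \Omega (2)\to \ic _V(3)\to 0$, giving $rk(F)=3$ and then $c_4(E(1))=0$ forcing $a\le 7$, against $a=r+3\ge 8$. Your case analysis misses this surface entirely. Second, your disposal of the elliptic quintic scroll is wrong: for that scroll $\ic _X(3)$ \emph{is} globally generated --- the paper's own argument rests on the surjection $5.\oc \twoheadrightarrow \ic _X(3)$ from $0\to T(-2)\to 5.\oc \to \ic _X(3)\to 0$ --- so ``the cubics do not generate the ideal sheaf'' yields no contradiction; the correct one is again by Chern classes ($\cc (F(1))=\cc (T(-2))$ has $c_4\neq 0$, contradicting $rk(F)<4$), which you mention only as a parenthetical alternative without carrying it out. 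Finally, your duality reduction needs the kernel $E\dual (-1)$ of the dual sequence to be non-split; the promised ``direct Chern-class computation'' for the case where $E$ splits is not supplied and is not a one-liner (the split-kernel situation is precisely what the paper's last lemma treats, with case-by-case arithmetic for specific values of $a$).
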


\begin{proof} Assume $c$ or $b:=r-c$ $< 4$. By dualizing the exact sequence \ref{eq:exact seq} if necessary, we may assume $b < 4$. We have an exact sequence (\ref{eq:r-1}):
$$ 0 \to (r-1).\oc \to \ec (1) \to \ic _X(b) \to 0$$
where $X \subset \Pq$ is a smooth surface of degree $d=c_2(\ec (1))$. If $b <3$, $X$ is either a complete intersection $(1,d)$ or lies on a hyper-quadric. In any case $X$ is a.C.M. By Lemma \ref{L-X aCM}, $F$ is a direct sum of line bundles. 

Assume $b=3$. From the classification of smooth surfaces in $\Pq$ we know that if $d \leq 3$, then $X$ is a.C.M. Now $X$ is either a complete intersection $(3,3)$, hence a.C.M. or linked to a smooth surface, $S$, of degree $9-d$ by such a complete intersection. If $S$ is a.C.M. the same holds for $X$. From the classification of smooth surfaces of low degree in $\Pq$, if $X$ is not a.C.M. we have two possibilities:\\
(i) $X$ is a Veronese surface and $S$ is an elliptic quintic scroll,\\
(ii) $X$ is an elliptic quintic scroll and $S$ is a Veronese surface.
\medskip

(i) If $X=V$ is a Veronese surface then we have an exact sequence:
$$0 \to 3.\oc  \to \Omega (2) \to \ic _V(3) \to 0$$
It follows that $\cc (\ec (1)) = \cc (\Omega (2)) = 1 + 3h + 4h^2 + 2h^3 + h^4$. So $\cc (F(1)) = (\cc (\ec (1))^{-1} = 1-3h+5h^2-5h^3$. It follows that $F$ (and hence $E$ also) has rank three. From $\cc (E(1)) = (1+h)^a.\cc(F(1))$ and $c_4(E(1))=0$, we get $0 = a(a-5)(a-6)(a-7)$. So $a \leq 7$. Since $a = rk(E) + r$, we get a contradiction.
\medskip

\noindent (ii) If $X = E$ is an elliptic quintic scroll, then we have:
$$0 \to T(-2) \to 5.\oc \to \ic _E(3) \to 0$$
It follows that $\cc (\ec (1)) = \cc (T(-2))^{-1}$ and $\cc (F(1)) = \cc (T(-2)) = 1-3h+4h^2-2h^3+h^4$, in contradiction with $rk(F) < 4$.
\end{proof}

\begin{lemma}
Assume we have an exact sequence (\ref{eq:exact seq}) on $\Pq$ with $a-r < 4$. If $r > 4$ and if $F$ is a direct sum of line bundles, then $rk(\ec ) \geq 8$.
\end{lemma}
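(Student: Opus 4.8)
The idea is to reduce the statement to a finite Chern class computation. Write $\rho:=a-r=rk(F)=rk(E)$; by hypothesis $\rho<4$, and since the sequence lives on $\Pq$ the associated subspace $M$ has dimension $5$, so Proposition \ref{P-2a-2r+1} gives $5\le l(r;a)\le 2\rho+1$, i.e. $\rho\ge 2$. As $F$ splits and $F\hookrightarrow a.\oc(-1)$, we may write $F=\bigoplus_{i=1}^{\rho}\oc(-g_i)$ with integers $g_i\ge 1$; set $\sigma_k:=e_k(g_1,\dots,g_\rho)$ (elementary symmetric functions), so that $c(F)=1-\sigma_1h+\sigma_2h^2-\cdots$ in $\bZ[h]/(h^5)$, while multiplicativity of Chern classes along (\ref{eq:exact seq}) gives $c(F)=(1-h)^a\,c(E)$. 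I would first rule out $\rho=2$: then $rk(E)=2$ forces the coefficients of $h^3$ and $h^4$ in $(1-\sigma_1h+\sigma_2h^2)(1-h)^{-a}$ to vanish; solving the two equations gives $\sigma_1=(a+2)/2$ and $\sigma_2=(a+1)(a+2)/12$, whence the discriminant $\sigma_1^2-4\sigma_2=(a+2)(2-a)/12$ is negative (as $a=r+2\ge 7$), contradicting $g_1,g_2\in\bR$. Hence $\rho=3$ and $a=r+3$.

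Now assume, for contradiction, that $r\le 7$, so $a\in\{8,9,10\}$. The crucial step is to confine $\sigma_1=g_1+g_2+g_3$ to a short interval. From $0\to F(1)\to a.\oc\to\ec(1)\to 0$ one reads $c_1(\ec(1))=-c_1(F(1))=\sigma_1-3$; set $d:=c_1(\ec(1))$. By Lemma \ref{L-ec uniform} the splitting type of $\ec$ is $(-1^c,0^{r-c})$ with $c=c_1(E)\ge 0$, so $d=r-c$ and hence $\sigma_1=d+3=a-c$. Now $\ec$ is not a direct sum of line bundles (else Lemma \ref{L-ec splits} would force $\dim M\le\rho+1=4$); combining this with Remark \ref{R-c1ec(1)=1} gives $d\ge 2$, and the same reasoning applied, via Remark \ref{R-D-1}, to $\ec\dual(-1)$ — for which $c_1(\ec\dual)=c_1(E)$ and whose defining sequence has ``$F$'' of rank $rk(E)=3<4$, so that Lemma \ref{L-c1ec(1)=1} applies — gives $c\ge 2$. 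Hence $5\le\sigma_1=a-c\le a-2$.

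Finally $rk(E)=3$ forces $c_4(E)=0$, and the $h^4$-coefficient of $c(F)(1-h)^{-a}$ yields
\begin{equation}
\label{eq:heart}
\binom{a+3}{4}-\sigma_1\binom{a+2}{3}+\sigma_2\binom{a+1}{2}-\sigma_3\,a=0 .
\end{equation}
It remains to check that, for each $a\in\{8,9,10\}$ and each integer $\sigma_1$ with $5\le\sigma_1\le a-2$, and for every partition $\sigma_1=g_1+g_2+g_3$ into positive integers (which fixes $\sigma_2=e_2(g_i)$ and $\sigma_3=e_3(g_i)$), equation (\ref{eq:heart}) fails. There are only finitely many triples $(g_1,g_2,g_3)$, and a direct computation disposes of them all; this contradiction gives $a\ge 11$, i.e. $rk(\ec)=r\ge 8$. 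The genuinely delicate point is the middle paragraph, i.e. pinning $\sigma_1$ inside $[5,a-2]$, and especially the bound $c_1(E)\ge 2$ coming from Lemma \ref{L-c1ec(1)=1} applied to the dual sequence; the hypothesis that $F$ splits is used decisively to realize $c_2(F)=\sigma_2$ and $c_3(F)=-\sigma_3$ as elementary symmetric functions of positive integers of prescribed sum, which is exactly what turns (\ref{eq:heart}) into a finite Diophantine condition. The concluding verification is then routine arithmetic.
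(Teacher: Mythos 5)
Your argument is correct, and the finite verification you leave as ``routine arithmetic'' does indeed go through: for $a\in\{8,9,10\}$, $5\le\sigma_1\le a-2$ and every partition of $\sigma_1$ into three positive parts, the quantity $\binom{a+3}{4}-\sigma_1\binom{a+2}{3}+\sigma_2\binom{a+1}{2}-\sigma_3 a$ is never zero (I checked all cases; the closest calls are $-6$ for $a=9$, $(g_i)=(2,2,1)$ and $15$ for $a=10$, $(g_i)=(2,2,1)$), and your $\rho=2$ computation is also right: the two conditions $c_3(E)=c_4(E)=0$ form a linear system in $(\sigma_1,\sigma_2)$ with nonzero determinant, whose unique solution gives $(g_1-g_2)^2=(a+2)(2-a)/12<0$. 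The computational core --- $c_4(E)=0$ plus the splitting of $F$ turned into a Diophantine condition, with the splitting type pinned down via Lemma \ref{L-c1ec(1)=1} and dualization --- is the same as in the paper, but the organization differs in ways worth noting. The paper disposes of $r=5$ by quoting Theorem \ref{T-le thm} (3),(6), reduces $r=6,7$ to the extremal values $a=9,10$ (implicitly using $l(r;a)\le l(r;a+1)$, by adding a zero row and column), so it only computes with $rk(F)=3$, and it dualizes so as to assume $c\le r-c$, getting the shorter range $3\le s\le 5$; you instead stratify by $\rho=rk(F)$: Westwick's bound kills $\rho\le 1$, your discriminant trick kills $\rho=2$ outright (this step has no counterpart in the paper, which at that rank only records, in Remark \ref{R-a-2}, that $E$ and $F$ cannot both split --- so it is a genuinely stronger use of the hypothesis that $F$ splits), and for $\rho=3$ you keep both inequalities $c\ge 2$ and $r-c\ge 2$ instead of normalizing by duality, at the cost of a slightly larger, still tiny, finite check which also covers $a=8$ directly. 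Two small points you should spell out: the exclusions $c=0$ and $d=0$ rest on the standard fact that a bundle trivial on every line is trivial, so that splitting type $(0^r)$ or $(-1^r)$ forces $\ec$ to be a sum of line bundles and Lemma \ref{L-ec splits} applies; and $g_i\ge 1$ holds because each line-bundle summand of $F$ maps nontrivially into $a.\oc(-1)$.
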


\begin{proof} If $r=5$ we conclude with Theorem \ref{T-le thm}, (3), (6). If $r=6$, then $a \leq 9$ and it is enough to show that $l(6;9) \leq 4$ i.e. that there is no exact sequence (\ref{eq:exact seq}) on $\Pq$. In the same way, if $r=7$ it is enough to show that $l(7;10) \leq 4$.

If $r=6$, we may assume that the splitting type of $\ec$ is $(-1^1,0^5), (-1^2, 0^4), (-1^3,0^3)$. By dualizing and by Lemma \ref{L-c1ec(1)=1} we may disregard the first case. It follows that $c_1(F)=-7$ or $-6$. If $r=7$, in a similar way, we may assume that the splitting type of $\ec$ is $(-1^2,0^5)$ or $(-1^3,0^4)$. So $c_1(F)=-7$ or $-8$.

Let $\cc (F(1)) = (1-f_1h)(1-f_2h)(1-f_3h)$. We have $\cc (E(1)) = (1+h)^a\cc (F(1))$. From $c_4(E(1))=0$ we get:
$$\psi (a):= a^3 -a^2(4s+6) + a (12d+12s+11)-12d-8s-24t-6 =0$$
Where $s = -c_1(F(1))= \sum f_i$, $d =c_2(F(1)) = \sum _{i<j}f_if_j$, $t =-c_3(F(1)) = \prod f_i$. We have $f_i\geq 0, \forall i$ and $3 \leq s \leq 5$.

We have to check that this equality can't be satisfied for $a = 9, 10$. We have $\psi (9) = 8(42-28s + 12d -37)$. If $\psi (9)=0$ we get $3\mid s$. It follows that $s=3$. So the condition is: $4d-t = 14$. If one of the $f_i$'s is zero, then $t=0$ and we get a contradiction. So $f_i > 0, \forall i$ and the only possibility is $(f_i) = (1,1,1)$, but then $4d-t = 11 \neq 14$. 

For $a=10$, we get $\psi (10) = 504 -288s + 108d -24t$. If $f_1=f_2 =0$, then $d=t=0$ and we get $s= 504/288$ which is not an integer. If $f_1=0$, then $t=0$, $d=f_2f_3$, $s=f_2+f_3$. If $s \geq 4$, $504 + 108d = 288s \geq 1152$. It follows that $d \geq 6$. If $d=6$ we have necessarily $s=5$ and $\psi (10) \neq 0$. So $s=3$ and $d=2$, but also in this case $\psi (10)\neq 0$. We conclude that $f_i>0, \forall i$. So we are left with $(f_i) = (1,1,1)$, $(1,1,2)$, $(1,1,3)$, $(1,2,2)$. In any of these cases one easily checks that $\psi (10) \neq 0$.
\end{proof}

\begin{corollary}
We have $l(6;9)= l(7;10) =4$. In particular $l(r,a)$ is known for $a \leq 10$ and $1 \leq r \leq a$ and Conjecture \ref{Cj-l(r;a)} holds true for $a \leq 10$.
\end{corollary}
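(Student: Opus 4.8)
The plan is to assemble the lemmas of this section, since the corollary is essentially a bookkeeping statement once Proposition \ref{P-b>3} and the Lemma immediately preceding the corollary are available. First I would observe that Lemma \ref{L-l(r,a,a) geq a-r+1} already gives $l(6;9) \geq 4$ and $l(7;10) \geq 4$, so it suffices to prove the reverse inequalities $l(6;9) \leq 4$ and $l(7;10) \leq 4$. Equivalently, I must rule out the existence of a subspace $M \subset End(A)$ of constant rank $r$ and dimension $5$, with $(\dim A, r) = (9,6)$ or $(10,7)$ (a subspace of larger dimension would contain a $5$-dimensional one of the same constant rank, so this loses no generality). Such an $M$ would produce on $\Pq$ an exact sequence (\ref{eq:exact seq}) with $rk(F) = a - r = 3 < 4$, $rk(\ec) = r$, and $r \in \{6,7\}$; in particular $rk(F) < 4$ and $r > 4$.

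Next I would split into the two cases according to whether $F$ is a direct sum of line bundles. If $F$ is \emph{not} a direct sum of line bundles, then Proposition \ref{P-b>3} applies (its hypotheses $rk(F) < 4$ and $r > 4$ hold) and forces $c, r-c \geq 4$, whence $rk(\ec) = r \geq 8$, contradicting $r \in \{6,7\}$. If $F$ \emph{is} a direct sum of line bundles, then the Lemma preceding the corollary applies (again $a - r < 4$ and $r > 4$) and yields $rk(\ec) = r \geq 8$, the same contradiction. Hence no such $M$ exists, so $l(6;9) \leq 4$ and $l(7;10) \leq 4$, and combined with the lower bounds we get $l(6;9) = l(7;10) = 4$.

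For the remaining assertions, Remark \ref{R-a<=8} records that the results of the previous two sections already determine $l(r;a)$ for all $a \leq 8$ and $1 \leq r \leq a$, and that the only values missing in order to cover $a = 9, 10$ are exactly $l(6;9)$ and $l(7;10)$, now computed; thus $l(r;a)$ is known for all $a \leq 10$, $1 \leq r \leq a$. Finally, one checks directly that for $a \leq 10$ the integers $r$ with $(2a+2)/3 > r > (a/2)+1$ are precisely $(a,r) \in \{(7,5), (9,6), (10,7)\}$: for $(7,5)$, Theorem \ref{T-le thm}(6) gives $l(5;7) = 3 = a-r+1$, and for $(9,6)$ and $(10,7)$ we have just shown $l = 4 = a-r+1$. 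This is exactly Conjecture \ref{Cj-l(r;a)} in the range $a \leq 10$.

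I do not expect any genuine obstacle at this stage: the real work has already been done in Proposition \ref{P-b>3} (the classification of low-degree smooth surfaces in $\Pq$ together with the linkage argument) and in the preceding Lemma (the Chern-class elimination showing $\psi(9) = \psi(10) = 0$ has no admissible solution). The only point deserving a line of care in the assembly is verifying that both branches of the dichotomy on $F$ fall under the hypotheses of those two results, i.e. that $rk(F) = a-r = 3 < 4$ and $r > 4$ in both cases $(9,6)$ and $(10,7)$, which is immediate.
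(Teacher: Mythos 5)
Your proposal is correct and follows essentially the same route as the paper: the paper's proof simply says ``we have seen that $l(6;9), l(7;10)\leq 4$'', which implicitly is exactly your dichotomy (Proposition \ref{P-b>3} when $F$ is not a sum of line bundles, the preceding Lemma when it is), combined with the lower bound of Lemma \ref{L-l(r,a,a) geq a-r+1} and the earlier results for the remaining values of $(r,a)$ with $a\leq 10$. Your write-up just makes explicit the case split and the verification that $(7,5)$, $(9,6)$, $(10,7)$ are the only pairs in the conjectural range, which the paper leaves to the reader.
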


\begin{proof} We have seen that $l(6;9), l(7;10) \leq 4$, by Lemma \ref{L-l(r,a,a) geq a-r+1} we have equality. Then all the other values of $l(r;a)$ are given by Theorem \ref{T-le thm}, Proposition \ref{L-l(a-1;a)} and Proposition \ref{P-Westwick}, if $a \leq 10$.
\end{proof}


\end{document}